\def\fl{\mathop {\rm fl}}
\def\op{\mathop {\rm op}}
\def\trace{\mathop {\rm trace}}
\begin{document}
\title{La Budde's Method for Computing Characteristic Polynomials}

\author{Rizwana Rehman\thanks{%
Department of Medicine (111D), VA Medical Center, 508 Fulton Street,
Durham, NC 27705, USA
({\tt Rizwana.Rehman@va.gov})}
\and
Ilse C.F. Ipsen\thanks{%
    Department of
    Mathematics, North Carolina State University, P.O. Box 8205,
    Raleigh, NC 27695-8205, USA ({\tt ipsen@ncsu.edu}, {\tt
      http://www4.ncsu.edu/{\char'176}ipsen/})}
}

\overfullrule = 0pt
\maketitle
\begin{abstract}
  La Budde's method computes the characteristic polynomial of a real
  matrix $A$ in two stages: first it applies orthogonal similarity
  transformations to reduce $A$ to upper Hessenberg form $H$, and
  second it computes the characteristic polynomial of $H$ from
  characteristic polynomials of leading principal submatrices of $H$.  
If $A$
  is symmetric, then $H$ is symmetric tridiagonal, and La Budde's
  method simplifies to the Sturm sequence method.  If $A$ is diagonal
  then La Budde's method reduces to the Summation Algorithm, a
  Horner-like scheme used by the MATLAB function \texttt{poly} to
  compute characteristic polynomials from eigenvalues.

  We present recursions to compute the individual coefficients of the
  characteristic polynomial in the second stage of La Budde's method,
  and derive running error bounds for symmetric and nonsymmetric
  matrices. We also show that La Budde's method can be more accurate
  than \texttt{poly}, especially for indefinite and nonsymmetric
  matrices $A$. Unlike \texttt{poly}, La Budde's method is not
  affected by illconditioning of eigenvalues, requires only real
  arithmetic, and allows the computation of individual coefficients.
\end{abstract}

\begin{keywords} 
Summation Algorithm, Hessenberg matrix, tridiagonal matrix, 
roundoff error bounds, eigenvalues
\end{keywords}
\begin{AMS}
65F15, 65F40, 65G50, 15A15, 15A18
\end{AMS}

\section{Introduction}
We present a little known numerical method for computing characteristic
polynomials of real matrices. The characteristic
polynomial of a $n\times n$ real matrix $A$ is defined as
$$p(\lambda)\equiv \det(\lambda I-A)=
\lambda^n+c_1\lambda^{n-1}+\cdots+c_{n-1}\lambda+c_n,$$
where $I$ is the identity matrix, $c_1=-\trace(A)$ and $c_n=(-1)^n\det(A)$.

The method was first introduced in
1956 by Wallace Givens at the third High Speed Computer Conference at
Louisiana State University \cite{Giv57}.  According to Givens, the
method was brought to his attention by his coder Donald La
Budde \cite[p 302]{Giv57}.
Finding no earlier reference to this method, we credit its
development to La Budde and thus name it ``La Budde's method''. 

La Budde's method consists of two stages:
In the first stage it reduces $A$ to upper Hessenberg form $H$ with
orthogonal similarity transformations, and 
in the second stage it computes the
characteristic polynomial of $H$. The latter is done by computing characteristic
polynomials of leading principal submatrices of successively larger
order.  Because $H$ and $A$ are similar, they have the same
characteristic polynomials. If $A$ is symmetric, then $H$ is a
symmetric tridiagonal matrix, and La Budde's method simplifies to
the Sturm sequence method \cite{Giv53}.
If $A$ is diagonal then La Budde's method reduces to the 
Summation Algorithm, a Horner-like scheme that 
is used to compute characteristic
polynomials from eigenvalues \cite{ReI10a}. The Summation Algorithm is the
basis for MATLAB's \texttt{poly} command, which 
computes the characteristic polynomial by applying the
Summation Algorithm to eigenvalues computed with \texttt{eig}.

We present recursions to compute the individual coefficients of the
characteristic polynomial in the second stage of La Budde's method.
La Budde's method has a number of advantages over \texttt{poly}.
First, a Householder reduction of $A$ to Hessenberg form $H$ in the first stage
is numerically stable, and it does not change 
the condition numbers \cite{IpsR07} of the coefficients $c_k$ 
with respect to changes in the matrix.
In contrast to \texttt{poly}, La Budde's
method is not affected by potential illconditioning of eigenvalues.
Second, La Budde's method allows the computation of individual 
coefficients $c_k$ (in the process, $c_1,\ldots, c_{k-1}$ are
computed as well) and is substantially faster if $k\ll n$.
This is important in the context of our quantum
physics application, where only a 
small number of coefficients are required \cite[\S 1]{IpsR07},
\cite{DeanLee,Lee:2004qd}.

Third, La Budde's method is efficient, requiring only about $5n^3$
floating point operations and real arithmetic. This is in 
contrast to \texttt{poly} which requires complex arithmetic when a real
matrix has complex eigenvalues.
Most importantly, La Budde's method can often be
more accurate than \texttt{poly}, and can even compute coefficients of
symmetric matrices to high relative accuracy.
Unfortunately we have not been able to derive
error bounds that are tight enough to predict this accuracy.

In this paper we assume that the matrices are real.  Error bounds for complex
matrices are derived in \cite[\S 6]{Rizth}.

\subsubsection*{Overview} 
After reviewing existing numerical methods for computing characteristic
polynomials in \S \ref{C4}, we introduce La Budde's method in \S \ref{C_6}.
Then we present recursions  for the second stage of La Budde's method 
and running error bounds, for symmetric matrices in \S \ref{s_sym}
and for nonsymmetric matrices in \S \ref{s_nonsym}.
In \S \ref{s_combined} we present running error bounds for both stages of
La Budde's method. We end with numerical experiments in \S \ref{s_exp} 
that compare La Budde's method to MATLAB's \texttt{poly} function and
demonstrate the accuracy of La Budde's method.

\section{Existing Numerical Methods}\label{C4}
In the nineteenth century and in the first half of the twentieth century
characteristic polynomials were often computed as a precursor to an eigenvalue 
computation.
In the second half of the twentieth century, however, Wilkinson and others
demonstrated that computing 
eigenvalues as roots of characteristic polynomials is numerically 
unstable \cite{Wil65,Wil84C}. As a consequence, characteristic polynomials
and methods for computing them
fell out of favor with the numerical linear algebra community.
We give a brief overview of these methods. They can be found in the 
books by Faddeeva \cite{Fad59}, Gantmacher \cite{Gant98}, and
Householder \cite{Hou64}.

\subsection{Leverrier's Method}
The first practical method for computing characteristic polynomials
was developed by Leverrier in $1840$. It is based on Newton's 
identities \cite[(7.19.2)]{Wil65}
$$c_1=-\trace(A),\qquad c_k=-\frac{1}{k}\trace\left(A^k+c_1A^{k-1}+\cdots+
c_{k-1}A\right),\quad 2\leq k\leq n.$$
The Newton identities can be expressed recursively as
$$c_k=-\frac{1}{k}\trace(AB_{k-1}),\qquad \mathrm{where}\quad
B_1 \equiv A+c_1I,\quad B_k \equiv AB_{k-1}+c_kI.$$
Leverrier's method and modifications of 
it have been rediscovered  by Faddeev and Sominski{\u \i},
Frame, Souriau, and Wegner, see \cite{HouB59}, and also 
Horst \cite{Hor35}.
Although Leverrier's method is expensive, with an operation count 
proportional to $n^4$, it continues to attract
attention. It has been proposed for computing $A^{-1}$, sequentially
\cite{Bing41} and in parallel \cite{Csa76}.
Recent papers have focused on different 
derivations of the method \cite{Barnett89,Hou98,Papa74}, combinatorial 
aspects \cite{Lewin94}, properties of the adjoint \cite{HelmWV93},
and expressions of $p(\lambda)$ in specific polynomial
bases \cite{Barnett96}.

In addition to its large operation count, Leverrier's method 
is also numerically unstable. Wilkinson remarks \cite[\S 7.19]{Wil65}:
\begin{quote}
  ``We find that it is common for severe cancellation to take place
  when the $c_i$ are computed, as can be verified by estimating the
  orders of magnitudes of the various contributions to $c_i.$''
\end{quote}
Wilkinson identified two factors that are responsible for the numerical 
instability of computing $c_k$: errors in the computation of the trace,
and errors in the previously computed coefficients $c_1,\ldots,c_{k-1}$.

Our numerical experiments on many test matrices  corroborate Wilkinson's
observations. We found that Leverrier's method gives inaccurate results
even for coefficients that are well conditioned.
For instance, consider the $n\times n$ matrix $A$ of all ones.
Its characteristic polynomial is $p(\lambda)=\lambda^n-n\lambda^{n-1}$,
so that  $c_2=\cdots=c_n=0$.
Since $A$ has only a single nonzero singular value $\sigma_1=n$,
the coefficients $c_k$ are well conditioned (because $n-1$
singular values are zero, the first order
condition numbers with respect to absolute changes in the
matrix are zero \cite[Corollary 3.9]{IpsR07}).
However for $n=40$, Leverrier's method computes values for
$c_{22}$ through $c_{40}$ in the range of $10^{18}$ to $10^{47}$.

The remaining methods described below have operation counts
proportional to $n^3$. 

\subsection{Krylov's Method and Variants}\label{S:SS1}
In 1931 Krylov presented a method that implicitly  tries to 
reduce $A$ to a companion matrix, whose
last column contains the coefficients of
$p(\lambda)$. Explicitly, the method constructs a matrix $K$ from
what we now call Krylov vectors: $v, Av, A^2v, \ldots$ where
$v\neq 0$ is an arbitrary vector. Let $m\geq 1$ 
be the \textit{grade} of the vector, that is the smallest index for which the 
vectors $v, Av,\ldots A^{m-1}v$ are linearly independent, but the 
inclusion of one more vector $A^mv$ makes the vectors linearly dependent.
Then the linear system
$$K x+A^mv=0,\qquad \mathrm{where}\quad
K\equiv \begin{pmatrix} v & Av &\ldots & A^{m-1}v\end{pmatrix}$$
has the unique solution $x$. 
Krylov's method solves this linear system $K x=-A^mv$ for $x$.
In the fortunate case when  $m=n$ the solution
$x$ contains the coefficients of $p(\lambda)$, and $x_i=-c_{n-i+1}$.
If $m<n$ then $x$ contains only coefficients of a divisor
of $p(\lambda)$.

The methods by Danilevski{\u \i}, Weber-Voetter, and Bryan
can be viewed as
particular implementations of Krylov's method \cite[\S 6]{Hou64}, as can
the method by Samuelson \cite{Sam42}. 

Although Krylov's method is quite general, it has a number of 
shortcomings. First Krylov vectors tend to become linearly dependent, so that
the linear system $Kx=-A^mv$ tends to be highly illconditioned.
Second, we do not know in advance the grade $m$ of the initial
vector $v$; therefore, we may end up only with a divisor of 
$p(\lambda)$. If $A$ is derogatory, i.e. some
eigenvalues of $A$ have geometric multiplicity 2 or larger, then every
starting vector $v$ has grade $m<n$, and Krylov's method
does not produce the characteristic polynomial of $A$.
If $A$ is non derogatory, then it is similar to its
companion matrix, and almost every starting vector should give
the characteristic polynomial. Still it is possible to start with a
vector $v$ of grade $m<n$, where Krylov's method fails to produce
$p(\lambda)$ for a non derogatory matrix $A$ \cite[Example 4.2]{Ham70}.

The problem with Krylov's method, as well as the related
methods by Danilevski{\u \i}, Weber-Voetter, Samuelson, Ryan and Horst 
is that they try to compute, 
either implicitly or explicitly, a similarity transformation to 
a companion matrix.
However, such a transformation only exists if $A$ is nonderogatory, and it
can be numerically stable only if $A$ is far from derogatory.
It is therefore not clear that remedies like those proposed 
for Danilevski{\u\i}'s method in
\cite{Han63}, \cite[p 36]{HouB59}, \cite{WaCh82},
\cite[\S 7.55]{Wil65} would be fruitful.

The analogue of the companion form for derogatory matrices is 
the Frobenius normal form. This is a similarity transformation 
to block triangular form, where the diagonal blocks are companion matrices.
Computing Frobenius normal forms is common in computer algebra
and symbolic computations e.g. \cite{Gies95}, but is numerically not 
viable because it requires
information about Jordan structure and is thus an illposed problem.
This is true also of Wiedemann's algorithm \cite{KalS91,Wie86},
 which works with $u^TA^ib$, where
$u$ is a vector, and can be considered a ``scalar version'' of Krylov's method.

\subsection{Hyman's method} 
Hyman's method computes the characteristic polynomial for  Hessenberg 
matrices \cite[\S 7.11]{Wil65}. The basic idea can be described
as follows. Let $B$ be a $n\times n$ matrix, and partition
$$B=\bordermatrix{&n-1&1 \cr 1& b_1^T & b_{12}\cr
n-1 &B_2 &b_2}.$$
If $B_2$ is nonsingular then
$\det(B)=(-1)^{n-1}\det(B_2)(b_{12}-b_1^TB_2^{-1}b_2)$.
Specifically, if $B=\lambda I-H$ where $H$ is an unreduced upper Hessenberg 
matrix then $B_2$ is nonsingular upper triangular, so that
$\det(B_2)=(-1)^{n-1}h_{21}\cdots h_{n,n-1}$ is just the product
of the subdiagonal elements. Thus
$$p(\lambda)=h_{21}\cdots h_{n,n-1}(b_{12}-b_1^TB_2^{-1}b_2).$$
The quantity $B_2^{-1}b_2$ can be computed as the solution of a
triangular system. However $b_1$, $B_2$, and $b_2$ are functions of $\lambda$.
To recover the coefficients of $\lambda^i$ requires the solution of $n$ 
upper triangular systems \cite{MQVP95}.

A structured backward error bound 
under certain conditions has been derived in \cite{MQVP95}, and
iterative refinement is suggested for improving backward accuracy.
However, it is not clear that this will help in general. The numerical
stability of Hyman's method depends on the condition number with
respect to inversion of the triangular matrix $B_2$. Since the
diagonal elements of $B_2$ are $h_{21},\ldots,h_{n,n-1}$, $B_2$ can be
ill conditioned with respect to inversion if $H$ has small subdiagonal
elements.

\subsection{Computing Characteristic Polynomials from 
Eigenvalues}\label{s_evalues}
An obvious way to compute the coefficients of the characteristic
polynomial is to compute the eigenvalues $\lambda_i$ and then
multiply out $\prod_{i=1}^n{(\lambda-\lambda_i)}$.
The MATLAB function \texttt{poly} does this. It first computes the eigenvalues
with \texttt{eig} and then uses a Horner-like scheme, the so-called
Summation Algorithm, to determine the $c_k$ from the eigenvalues $\lambda_j$
as follows:

\begin{quote}
\texttt{c = [1 zeros(1,n)]}\\
\texttt{for j = 1:n}\\
$\qquad$\texttt{c(2:(j+1)) = c(2:(j+1)) - $\lambda_j$.*c(1:j)}\\
\texttt{end}
\end{quote}

The accuracy of \texttt{poly} is highly dependent on the accuracy
with which the eigenvalues $\lambda_j$ are computed.
In \cite[\S 2.3]{ReI10a} we present perturbation bounds for characteristic
polynomials with regard to changes in the eigenvalues, and show that the
errors in the eigenvalues are amplified by elementary symmetric functions
in the absolute values of the eigenvalues. Since eigenvalues of
non-normal (or nonsymmetric) matrices are much more sensitive
than eigenvalues of normal matrices and are computed to much lower accuracy,
\texttt{poly} in turn tends to compute characteristic
polynomials of non-normal matrices to much lower accuracy.
As a consequence, \texttt{poly} gives useful results only for the limited
class of matrices with wellconditioned eigenvalues.

\section{La Budde's Method}\label{C_6}
La Budde's method works in two stages. In the first stage it reduces
a real matrix $A$ to upper Hessenberg form $H$ by 
orthogonal similarity transformations. In the second stage 
it determines the characteristic polynomial of $H$ by
successively computing characteristic polynomials of leading principal
submatrices of $H$.  Because $H$ and $A$ are similar, they have the
same characteristic polynomials. 
If $A$ is symmetric, then $H$ is a
symmetric tridiagonal matrix, and La Budde's method simplifies to 
the Sturm sequence method. The Sturm sequence
method was used by Givens \cite{Giv53} to compute
eigenvalues of a symmetric tridiagonal matrix $T$, and is the
basis for the bisection method \cite[\S\S 8.5.1, 8.5.2]{GovL96}.

Givens said about La Budde's method \cite[p 302]{Giv57}:
\begin{quote}
Since no division occurs in this second stage of the computation
  and the detailed examination of the first stage for the symmetric
  case [...] was successful in guaranteeing its accuracy there, one may hope
  that the proposed method of getting the characteristic equation will
  often yield accurate results. It is, however, probable that
  cancellations of large numbers will sometimes occur in the floating
  point additions and will thus lead to excessive
  errors.
\end{quote}

Wilkinson also preferred La Budde's method to computing
the Frobenius form. He states \cite[\S 6.57]{Wil65}:
\begin{quote}
We have described the determination of the Frobenius form in
  terms of similarity transformations for the sake of consistency and
  in order to demonstrate its relation to Danilewski's  method. 
However, since we will usually use higher precision arithmetic in the 
reduction to Frobenius form than in the reduction to Hessenberg form,
the reduced matrices arising in the derivation of the former cannot be
overwritten in the registers occupied by the Hessenberg matrix.

It is more straightforward to think in terms of a
  direct derivation of the characteristic polynomial of $H$. This
  polynomial may be obtained by recurrence relations in which we
  determine successively the characteristic polynomials of each of the
  leading principal submatrices $H_r$ ($r=1,\ldots,n$) of $H$.
[...]

 No special difficulties arise if some
  of the [subdiagonal entries of $H$] are small or even
 zero. 
\end{quote}

La Budde's method has several attractive features. 
First, a Householder reduction of $A$ to Hessenberg form $H$ in the first stage
is numerically stable \cite[\S 7.4.3]{GovL96}, \cite[\S 6.6]{Wil65}.  
Since orthogonal transformations do not change the singular values,
and the condition numbers of the coefficients $c_k$ to changes in the matrix
are functions of singular values \cite{IpsR07},
the sensitivity of the $c_k$ does not change in the reduction from $A$ to $H$.
In contrast to the eigenvalue based method in \S \ref{s_evalues}, La Budde's
method is not affected by the conditioning of the eigenvalues.

Second, La Budde's method allows the computation of individual 
coefficients $c_k$ (in the process, $c_1,\ldots, c_{k-1}$ are
computed as well) and is substantially faster if $k\ll n$.
This is important in the context of our quantum
physics application, where only a 
small number of coefficients are required \cite[\S 1]{IpsR07},
\cite{DeanLee,Lee:2004qd}.

Third, La Budde's method is efficient. The Householder reduction to 
Hessenberg form requires $10n^3/3$ floating point operations 
\cite[\S 7.4.3]{GovL96}, while the second stage requires
$n^3/6$ floating point operations \cite[\S 6.57]{Wil65}
(or $4n^3/3$ flops if $A$ is symmetric \cite[\S 8.3.1]{GovL96}).
If the matrix $A$ is real, then only real arithmetic is needed --
in contrast to eigenvalue based methods which require complex
arithmetic if a real matrix has complex  eigenvalues.

\section{Symmetric Matrices}\label{s_sym}
In the first stage, La Budde's method reduces a real symmetric matrix
$A$ to tridiagonal form $T$
by orthogonal similarity transformations. The second stage, where it 
computes the coefficients of the characteristic polynomial of $T$,
amounts to the Sturm sequence method \cite{Giv53}.
We present recursions to compute individual coefficients in
the second stage of La Budde's method in \S \ref{s_ss}, describe
our assumptions for the floating point analysis in \S \ref{s_ass1},
and derive running error bounds in \S \ref{s_srun}.

\subsection{The Algorithm}\label{s_ss}
We present an implementation of the second
stage of La Budde's method for symmetric matrices.
Let
$$T=\begin{pmatrix}
\alpha_1&\beta_2& & &  \\
\beta_2& \alpha_2& \beta_3& &  \\
 &\ddots & \ddots& \ddots &\\
& &  \ddots&\ddots& \beta_n\\
  & & & \beta_n& \alpha_n
  \end{pmatrix}$$
be a $n\times n$ real symmetric tridiagonal matrix
with characteristic polynomial $p(\lambda)\equiv \det(\lambda I-T)$.
In the process of computing $p(\lambda)$,
the Sturm sequence method computes characteristic polynomials
$p_i(\lambda)\equiv\det(\lambda I-T_i)$ 
of all leading principal submatrices $T_i$ of order $i$,
where $p_n(\lambda)=p(\lambda)$.
The recursion for computing $p(\lambda)$ 
is \cite{Giv53}, \cite[(8.5.2)]{GovL96}
\begin{eqnarray}\label{e_sturm}
p_0(\lambda)&=&1, \quad
p_1(\lambda)=\lambda-\alpha_1\nonumber\\
p_i(\lambda)&=&(\lambda-\alpha_i)p_{i-1}(\lambda)-\beta_i^2 p_{i-2}(\lambda),
\qquad 2\leq i\leq n.
\end{eqnarray}
In order to recover individual coefficients of $p(\lambda)$
from the recursion (\ref{e_sturm}), we identify the polynomial 
coefficients
$$p(\lambda)=\lambda^n+c_1\lambda^{n-1}+\cdots+c_{n-1}\lambda+c_n$$
and
$$p_i{(\lambda)}=
\lambda^i+c_1^{(i)}\lambda^{i-1}+\cdots+c_{i-1}^{(i)}\lambda+c_i^{(i)},
\qquad 1\leq i\leq n,$$
where $c_k^{(n)}=c_k$.
Equating like powers of $\lambda$ on both sides of (\ref{e_sturm}) gives
recursions for individual coefficients $c_k$, which are 
presented as Algorithm 1. 
In the process, $c_1,\ldots,c_{k-1}$ are also computed.

\begin{algorithm}  
\caption{La Budde's method for symmetric tridiagonal matrices}
\algsetup{indent=3em}
\begin{algorithmic}[2]
\REQUIRE $n\times n$ real symmetric tridiagonal matrix $T$, index $k$ 
\ENSURE  Coefficient $c_1,\ldots, c_k$ of $p(\lambda)$
\STATE $c_1^{(1)} = -\alpha_1$

\STATE $c_1^{(2)} = c_1^{(1)}-\alpha_2$,  $c_2^{(2)} = \alpha_1\alpha_2-\beta_2^2$
\FOR {$i=3:k$}
\STATE $c_1^{(i)}=c_1^{(i-1)}-\alpha_i$
\STATE $c_2^{(i)} = c_2^{(i-1)} -\alpha_ic_1^{(i-1)}-\beta_i^2$
\FOR {$j=3:i-1$}
\STATE $c_j^{(i)}=c_j^{(i-1)}-\alpha_ic_{j-1}^{(i-1)}-\beta_i^2 c_{j-2}^{(i-2)}$
\ENDFOR
\STATE $c_i^{(i)}=-\alpha_ic_{i-1}^{(i-1)}-\beta_i^2 c_{i-2}^{(i-2)}$
\ENDFOR
\FOR {$i=k+1:n$}
\STATE $c_1^{(i)}=c_1^{(i-1)}-\alpha_i$
\IF {$k\geq 2$}
\STATE $c_2^{(i)} = c_2^{(i-1)} -\alpha_ic_1^{(i-1)}-\beta_i^2$
\FOR {$j=3:k$}
\STATE $c_j^{(i)}=c_j^{(i-1)}-\alpha_ic_{j-1}^{(i-1)}-\beta_i^2 c_{j-2}^{(i-2)}$
\ENDFOR
\ENDIF
\ENDFOR
\STATE \COMMENT
{Now $c_j=c_j^{(n)}$, $1\leq j\leq k$}
\end{algorithmic}
\end{algorithm}

If $T$ is a diagonal matrix then Algorithm 1 reduces to the Summation 
Algorithm \cite[Algorithm 1]{ReI10a} for computing characteristic polynomials
from eigenvalues. The Summation Algorithm is the basis for
MATLAB's \texttt{poly} function, which applies it to eigenvalues 
computed by \texttt{eig}.
The example in Figure \ref{f_ex} shows the coefficients computed by Algorithm 1
when $n=5$ and $k=3$.

\begin{figure}
{\small
$$\begin{array}{l||l|lr|lr}
i & c_1^{(i)} & c_2^{(i)} && c_3^{(i)}&\\
\hline\hline
1 & c_1^{(1)}= -\alpha_1&&\\
2 & c_1^{(2)} = c_1^{(1)}-\alpha_2 &c_2^{(2)}=&\alpha_1\alpha_2-\beta_2^2\\
3 & c_1^{(3)} = c_1^{(2)}-\alpha_3 &
c_2^{(3)}=&c_2^{(2)}-\alpha_3c_1^{(2)}-\beta_3^2& 
c_3^{(3)}=&-\alpha_3c_2^{(2)}-\beta_3^2c_1^{(1)}\\
4 & c_1^{(4)} = c_1^{(3)}-\alpha_4 &
c_2^{(4)}=&c_2^{(3)}-\alpha_4c_1^{(3)}-\beta_4^2& 
c_3^{(4)}=&c_3^{(3)}-\alpha_4c_2^{(3)}-\beta_4^2c_1^{(2)}\\
5 & c_1^{(5)} = c_1^{(4)}-\alpha_5 &
c_2^{(5)}=&c_2^{(4)}-\alpha_5c_1^{(4)}-\beta_5^2& 
c_3^{(5)}=&c_3^{(4)}-\alpha_5c_2^{(4)}-\beta_5^2c_1^{(3)}
\end{array}$$}
\caption{Coefficients computed by Algorithm 1 for $n=5$ and $k=3$.}\label{f_ex}
\end{figure}

\newpage
\subsection{Assumptions for Running Error Bounds}\label{s_ass1}
We assume that all matrices are real.  Error bounds for complex
matrices are derived in \cite[\S 6]{Rizth}. In addition, we make the
following assumptions:

\begin{enumerate}
\item The matrix elements are normalized real floating point numbers.
\item The coefficients computed in floating point arithmetic
are denoted by $\hat{c}_k^{(i)}$.  
\item The output from the floating point 
computation of Algorithms 1 and 2 is $\fl[c_k]\equiv\hat{c}_k^{(n)}$.
In particular, $\fl[c_1]\equiv c_1=-\alpha_1$.
\item The error in the computed coefficients is $e_k^{(i)}$ so that
$e_1^{(1)}=0$ and  
\begin{eqnarray}\label{e_error}
\hat c_k^{(i)}=c_k^{(i)}+e_k^{(i)}, \qquad 2\leq i\leq n,\quad 1\leq k\leq n.
\end{eqnarray}
\item The operations  do not cause underflow or overflow.
\item The symbol $u$ denotes the unit roundoff, and $nu<1$.
\item Standard error model for real floating point 
arithmetic  \cite[\S 2.2]{Hig02}:

If $\op\in \{+,-,\times,/\}$, and
$x$ and $y$ are real normalized floating point numbers
so that $x \op y$ does not underflow or overflow, then
\begin{equation} \label{model1}
\fl[x  \op  y]=(x \op  y)(1+\delta) \qquad \mathrm{where}
\qquad |\delta| \>\leq u,
\end{equation}
and
\begin{equation} \label{model2}
\fl[x \op  y]=\frac{x \op y}{1+\epsilon} \qquad \mathrm{where}
 \qquad |\epsilon|\leq u.
\end{equation}
\end{enumerate}

The following relations are required for the error bounds.

\begin{lemma}[Lemma 3.1 and Lemma 3.3 in  \cite{Hig02}]\label{theta}
Let $\delta_i$ and $\rho_i$ be real numbers, $1\leq i\leq n$, with 
$|\delta_i|\leq u$ and $\rho_i=\pm1$. If $nu<1$ then
\begin{enumerate}
\item $\prod_{i=1}^{n}(1+\delta_i)^{\rho_i}=1+\theta_n$,
where
$$|\theta_n|\leq \gamma_n\equiv \frac{n u}{1-n u}.$$
\item $(1+\theta_j)(1+\theta_k)=1+\theta_{j+k}$
\end{enumerate}
\end{lemma}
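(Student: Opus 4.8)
The plan is to prove the two parts separately by elementary means; nothing beyond Bernoulli's inequality and the arithmetic--geometric mean inequality is needed.

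For part 1, I would begin by bounding a single factor. From $|\delta_i|\le u$ we get $1-u\le 1+\delta_i\le 1+u$, and since $1+u\le(1-u)^{-1}$ while $1-u\le(1+u)^{-1}$, both $1+\delta_i$ and its reciprocal lie in the interval $[\,1-u,\ (1-u)^{-1}\,]$; hence $(1+\delta_i)^{\rho_i}$ does too, for either choice of sign $\rho_i=\pm1$. Taking the product over $i=1,\ldots,n$ then sandwiches $\prod_i(1+\delta_i)^{\rho_i}$ between $(1-u)^n$ and $(1-u)^{-n}$, so with $\theta_n\equiv\prod_i(1+\delta_i)^{\rho_i}-1$ one has $|\theta_n|\le\max\{(1-u)^{-n}-1,\ 1-(1-u)^n\}$. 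The next step is to note that $nu<1$ together with $n\ge1$ forces $0\le u<1$, so Bernoulli's inequality applies and gives $(1-u)^n\ge 1-nu$, which is positive; inverting, $(1-u)^{-n}\le(1-nu)^{-1}$. Finally, the arithmetic--geometric mean inequality $(1-u)^{-n}+(1-u)^n\ge2$ shows that the first term in the maximum is the larger one, so
$$|\theta_n|\le(1-u)^{-n}-1\le\frac{1}{1-nu}-1=\frac{nu}{1-nu}=\gamma_n.$$
An alternative is induction on $n$, splitting on the sign $\rho_n$; the case $\rho_n=-1$ is the binding one and leads to $|\theta_n|\le(\gamma_{n-1}+u)/(1-u)$, which a short computation bounds by $\gamma_n$ (and the case $\rho_n=+1$ is then dominated).

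For part 2 the idea is to exploit the identity $1+\gamma_m=(1-mu)^{-1}$. Setting $\theta_{j+k}\equiv\theta_j+\theta_k+\theta_j\theta_k$, so that $(1+\theta_j)(1+\theta_k)=1+\theta_{j+k}$, part 1 applied to each factor gives
$$|\theta_{j+k}|\le\gamma_j+\gamma_k+\gamma_j\gamma_k=(1+\gamma_j)(1+\gamma_k)-1=\frac{1}{(1-ju)(1-ku)}-1.$$
Expanding, $(1-ju)(1-ku)=1-(j+k)u+jku^2\ge1-(j+k)u$, and under the standing assumption $(j+k)u<1$ both quantities are positive, so the right-hand side above is at most $(1-(j+k)u)^{-1}-1=\gamma_{j+k}$, as desired.

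The arguments are essentially bookkeeping. The only points needing care are (i) deciding which of the two one-sided estimates dominates in part 1, which is exactly what the arithmetic--geometric mean step settles, and (ii) recording the hypothesis $(j+k)u<1$ in part 2, without which $\gamma_{j+k}$ would not even be well defined. I do not anticipate any substantive obstacle.
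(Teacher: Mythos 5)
The paper does not prove this lemma at all --- it is quoted verbatim from Higham (Lemmas 3.1 and 3.3 of \cite{Hig02}) and used as a black box --- so there is no in-paper proof to compare against. Your argument is correct and self-contained: for part 1 the sandwich $(1-u)\le(1+\delta_i)^{\rho_i}\le(1-u)^{-1}$, the AM--GM step to identify $(1-u)^{-n}-1$ as the binding side, and Bernoulli's inequality give exactly $\gamma_n$ (a clean direct alternative to Higham's induction, which you also sketch); for part 2 the identity $1+\gamma_m=(1-mu)^{-1}$ does the work, and you are right to record the hypothesis $(j+k)u<1$, which the paper's statement leaves implicit.
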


\subsection{Running Error Bounds}\label{s_srun}
We derive running error bounds for Algorithm 1,
first for $\hat{c}_1$, then for $\hat{c}_2$, and at last 
for the remaining coefficients $\hat{c}_j$, $3\leq j\leq k$.

The bounds below apply to lines 2, 4, and 14 of Algorithm 1.

\begin{theorem}[Error bounds for $\hat{c}_1^{(i)}$]\label{error_1}
If the assumptions in \S \ref{s_ass1} hold and 
$$\hat{c}_1^{(i)}=\fl\left[\hat{c}_1^{(i-1)}-\alpha_i\right],
\qquad 2\leq i\leq n,$$
then
$$|e_1^{(i)}|\leq |e_1^{(i-1)}|+u\>|\hat c_1^{(i)}|, \qquad 2\leq i\leq n.$$
\end{theorem}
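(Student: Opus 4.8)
The plan is to apply the standard floating-point error model directly to the single subtraction that defines $\hat{c}_1^{(i)}$ and then read off how the error at step $i$ relates to the error at step $i-1$. Since each step of the recursion for $c_1$ involves only one arithmetic operation, there is no accumulation of several rounding errors within a step, so the argument is a short manipulation rather than an induction over the coefficient index.

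First I would invoke the division form of the error model~(\ref{model2}) for the subtraction on line~4 (the same bound covers lines~2 and~14): there is an $\epsilon_i$ with $|\epsilon_i|\leq u$ such that
$$(1+\epsilon_i)\,\hat{c}_1^{(i)}=\hat{c}_1^{(i-1)}-\alpha_i.$$
Next I would substitute the exact recursion $c_1^{(i)}=c_1^{(i-1)}-\alpha_i$ (obtained by equating the coefficients of $\lambda^{i-1}$ on both sides of~(\ref{e_sturm})) together with the error definition $\hat{c}_1^{(i-1)}=c_1^{(i-1)}+e_1^{(i-1)}$ from~(\ref{e_error}), which turns the display into $(1+\epsilon_i)\,\hat{c}_1^{(i)}=c_1^{(i)}+e_1^{(i-1)}$. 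Rearranging and using $e_1^{(i)}=\hat{c}_1^{(i)}-c_1^{(i)}$ gives the clean recurrence
$$e_1^{(i)}=e_1^{(i-1)}-\epsilon_i\,\hat{c}_1^{(i)}.$$
Taking absolute values, applying the triangle inequality, and bounding $|\epsilon_i|\leq u$ then yields $|e_1^{(i)}|\leq|e_1^{(i-1)}|+u\,|\hat{c}_1^{(i)}|$, which is the claim.

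There is essentially no obstacle in this argument; the only point that requires thought is the choice of error model. I would deliberately use the denominator form~(\ref{model2}) rather than the numerator form~(\ref{model1}), because the former expresses the perturbation in terms of the \emph{computed} quantity $\hat{c}_1^{(i)}$, which is available at runtime, whereas the latter would leave the bound in terms of $\hat{c}_1^{(i-1)}-\alpha_i$ or the exact $c_1^{(i)}$ and introduce a spurious factor $1/(1-u)$. Making the running bound depend only on quantities produced by Algorithm~1 is precisely the design goal here, so~(\ref{model2}) is the natural choice; the later bounds for $\hat{c}_2^{(i)}$ and $\hat{c}_j^{(i)}$ will need the more elaborate bookkeeping of Lemma~\ref{theta} only because several operations are chained within one step, a complication that does not arise for $\hat{c}_1^{(i)}$.
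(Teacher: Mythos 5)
Your argument is correct and follows exactly the paper's own proof: apply the denominator form (\ref{model2}) of the error model to the single subtraction, substitute the error decomposition (\ref{e_error}) to obtain $e_1^{(i)}=e_1^{(i-1)}-\epsilon^{(i)}\hat c_1^{(i)}$, and conclude by the triangle inequality. Your remark about why (\ref{model2}) is preferable to (\ref{model1}) for a running error bound is a sound observation, consistent with the paper's implicit design choice.
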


\begin{proof}
The model (\ref{model2}) implies 
$(1+\epsilon^{(i)}) \hat c_1^{(i)}= \hat{c}_1^{(i-1)}-\alpha_i$
where $|\epsilon^{(i)}|\leq u$.
Writing the computed coefficients $\hat c_1^{(i)}$  and $\hat c_1^{(i-1)}$ 
in terms of their errors (\ref{e_error}) and 
then simplifying gives
$$e_1^{(i)}=e_1^{(i-1)}-\epsilon^{(i)}\hat c_1^{(i)}.$$
Hence $|e_1^{(i)}|\leq |e_1^{(i-1)}|+u\>|\hat c_1^{(i)}|$.
\end{proof}

The bounds below apply to lines 2, 6, and 16 of Algorithm 1.

\begin{theorem}[Error bounds for $\hat{c}_2^{(i)}$]\label{error_2}
If the assumptions in \S \ref{s_ass1} hold, and
\begin{eqnarray*}
\hat{c}_2^{(2)} & = &\fl\biggl[\fl\left[\alpha_1\alpha_2\right]-
\fl\left[\beta_2^2\right]\biggr]\\
\hat c_2^{(i)} & = &\fl\biggl[\fl\left[\hat c_2^{(i-1)}-
\fl\left[\alpha_i\hat c_1^{(i-1)}\right]\right]-\fl\left[\beta_i^2\right]\biggr],
\qquad 3\leq i\leq n,
\end{eqnarray*}
then
\begin{eqnarray*}
|e_2^{(2)}|& \leq & u\>\left(|\alpha_2\alpha_1|+|\beta_2^2|+
|\hat c_2^{(2)}|\right)\\
|e_2^{(i)}|&\leq &|e_2^{(i-1)}|+|\alpha_ie_1^{(i-1)}|+
u\>\left(|\hat c_2^{(i-1)}|+|\beta_i^2|+|\hat c_2^{(i)}|\right)+
\gamma_2\>|\alpha_i\hat{c}_1^{(i-1)}|.
\end{eqnarray*}
\end{theorem}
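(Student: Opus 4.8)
The plan is to follow exactly the template of the proof of Theorem~\ref{error_1}: apply the floating point models (\ref{model1}) and (\ref{model2}) to each elementary operation appearing in the formulas for $\hat c_2^{(i)}$, replace every computed quantity by its error representation (\ref{e_error}), and then subtract off the exact recursion $c_2^{(i)}=c_2^{(i-1)}-\alpha_ic_1^{(i-1)}-\beta_i^2$ so that $e_2^{(i)}$ is exposed as a sum of small rounding terms. The one bookkeeping decision that matters is which model to attach to which operation: I would use the multiplicative model (\ref{model1}) for the products $\alpha_1\alpha_2$, $\beta_i^2$, $\alpha_i\hat c_1^{(i-1)}$ and for the inner subtraction, but the divisive model (\ref{model2}) for the \emph{outermost} subtraction, since that is what makes $|\hat c_2^{(i)}|$ enter the bound linearly with coefficient $u$.

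For the base case, write $\fl[\alpha_1\alpha_2]=\alpha_1\alpha_2(1+\delta_1)$ and $\fl[\beta_2^2]=\beta_2^2(1+\delta_2)$ with $|\delta_1|,|\delta_2|\le u$, and $\hat c_2^{(2)}(1+\epsilon)=\fl[\alpha_1\alpha_2]-\fl[\beta_2^2]$ with $|\epsilon|\le u$. Subtracting the exact value $c_2^{(2)}=\alpha_1\alpha_2-\beta_2^2$ gives $e_2^{(2)}=\alpha_1\alpha_2\,\delta_1-\beta_2^2\,\delta_2-\hat c_2^{(2)}\,\epsilon$, and the triangle inequality yields the first bound.

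For the recursive step, set $\fl[\alpha_i\hat c_1^{(i-1)}]=\alpha_i\hat c_1^{(i-1)}(1+\delta_1)$, $\fl[\beta_i^2]=\beta_i^2(1+\delta_3)$, $\fl[\hat c_2^{(i-1)}-\fl[\alpha_i\hat c_1^{(i-1)}]]=\bigl(\hat c_2^{(i-1)}-\alpha_i\hat c_1^{(i-1)}(1+\delta_1)\bigr)(1+\delta_2)$, all with moduli at most $u$, and apply (\ref{model2}) to the outer subtraction to get $\hat c_2^{(i)}(1+\epsilon)$ equal to that inner quantity minus $\beta_i^2(1+\delta_3)$. Part~2 of Lemma~\ref{theta} lets me combine the two factors multiplying $\alpha_i\hat c_1^{(i-1)}$ into $(1+\delta_1)(1+\delta_2)=1+\theta_2$ with $|\theta_2|\le\gamma_2$, so that $\hat c_2^{(i)}(1+\epsilon)=\hat c_2^{(i-1)}(1+\delta_2)-\alpha_i\hat c_1^{(i-1)}(1+\theta_2)-\beta_i^2(1+\delta_3)$. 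Now rewrite $\hat c_2^{(i-1)}-\alpha_i\hat c_1^{(i-1)}-\beta_i^2$ using (\ref{e_error}) as $c_2^{(i)}+e_2^{(i-1)}-\alpha_ie_1^{(i-1)}$; after cancellation, $e_2^{(i)}=e_2^{(i-1)}-\alpha_ie_1^{(i-1)}+\hat c_2^{(i-1)}\delta_2-\alpha_i\hat c_1^{(i-1)}\theta_2-\beta_i^2\delta_3-\hat c_2^{(i)}\epsilon$, and the triangle inequality with $|\delta_2|,|\delta_3|,|\epsilon|\le u$ and $|\theta_2|\le\gamma_2$ gives precisely the claimed bound.

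I expect the only real care to be needed in the consistent choice of models and in applying part~2 of Lemma~\ref{theta} to package the two rounding errors that both multiply $\alpha_i\hat c_1^{(i-1)}$ into a single $\gamma_2$ factor, rather than carrying loose $u^2$ cross terms; everything past that point is the same substitute-and-subtract computation as in Theorem~\ref{error_1}.
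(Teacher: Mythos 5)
Your proposal is correct and follows essentially the same route as the paper: model (\ref{model1}) on the products and the inner subtraction, model (\ref{model2}) on the outer subtraction, the two rounding factors on $\alpha_i\hat c_1^{(i-1)}$ packaged into a single $1+\theta_2$ via Lemma~\ref{theta}, and then the substitute-and-subtract step yielding exactly the error identity the paper derives before applying the triangle inequality. The only differences are notational.
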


\begin{proof}
The model (\ref{model1}) implies for the multiplications
$$\hat c_2^{(2)}=\fl\biggl[\alpha_2\alpha_1(1+\delta)-\beta_2^2(1+\eta)\biggr],
\qquad \mathrm{where}\quad |\delta|,|\eta|\leq u.$$
Applying the model (\ref{model2}) to the subtraction gives
$$(1+\epsilon)\hat c_2^{(2)}=\alpha_2\alpha_1(1+\delta)-\beta_2^2(1+\eta), 
\qquad |\epsilon|\leq u.$$
Now express  $\hat c_2^{(2)}$ in terms of the errors $e_2^{(2)}$ 
from (\ref{e_error}) and simplify.

For $3\leq i\leq n$, applying model (\ref{model1}) to the multiplications
and the first subtraction gives
$\hat c_2^{(i)}=\fl\left[g_1^{(i)}-g_2^{(i)}\right]$,
where 
$$g_1^{(i)}\equiv\hat c_2^{(i-1)}(1+\delta^{(i)})-
\alpha_i\hat{c}_1^{(i-1)}(1+\theta_{2}^{(i)}),\qquad
g_2^{(i)}=\beta_i^2(1+\eta^{(i)}),$$
$|\delta^{(i)}|$, $|\eta^{(i)}|\leq u$ and 
$|\theta_{2}^{(i)}| \leq \gamma_2$.
Applying the model (\ref{model2}) to the remaining subtraction gives
$$(1+\epsilon^{(i)})\hat  c_2^{(i)}=
\hat{c}_2^{(i-1)}(1+\delta^{(i)})-\alpha_i \hat {c}_1^{(i-1)}(1+\theta_{2}^{(i)})-
\beta_i^2(1+\eta^{(i)}),$$
where $|\epsilon^{(i)}|\leq u$. Now express $\hat{c}_2^{(i-1)}$ and 
$\hat{c}_1^{(i-1)}$ in terms of their errors (\ref{e_error}) to get
$$e_2^{(i)}= e_2^{(i-1)}+\delta^{(i)} \hat c_2^{(i-1)}-\alpha_i e_1^{(i-1)}-
\theta_{2}^{(i)}\alpha_i\hat c_1^{(i-1)}-\beta_i^2\eta^{(i)}-\epsilon^{(i)} \hat c_2^{(i)},$$
and apply the  triangle inequality.
\end{proof}

The bounds below apply to lines 8, 10, and 18 of Algorithm 1.

\begin{theorem}[Error bounds for $\hat{c}_j^{(i)}$,
$3\leq j\leq k$]\label{error_3}
If the assumptions in \S \ref{s_ass1} hold, and
\begin{eqnarray*}
\hat c_i^{(i)}&=&-\fl\biggl[\fl\left[\alpha_i\hat c_{i-1}^{(i-1)}\right]+
\fl\left[\beta_i^2\hat c_{i-2}^{(i-2)}\right]\biggr],\qquad 3\leq i\leq k,\\
\hat c_j^{(i)}&=&\fl\biggl[\fl\left[\hat{c}_j^{(i-1)}-
\fl\left[\alpha_i\hat c_{j-1}^{(i-1)}\right]\right]-
\fl\left[\beta_i^2\hat c_{j-2}^{(i-2)}\right]\biggr],
\qquad 3\leq j\leq k, \quad j+1\leq i\leq n,
\end{eqnarray*}
then
\begin{eqnarray*}
|e_i^{(i)}|&\leq &|\alpha_i e_{i-1}^{(i-1)}|+|\beta_i^2e_{i-2}^{(i-2)}|+ 
u\>\left(|\alpha_i \hat c_{i-1}^{(i-1)}|+|\hat c_i^{(i)}|\right)
+\gamma_2\>|\beta_i^2\>\hat c_{i-2}^{(i-2)}|\\
|e_j^{(i)}|&\leq &|e_j^{(i-1)}|+|\alpha_i e_{j-1}^{(i-1)}|+
|\beta_i^2 e_{j-2}^{(i-2)}|\\
&&+u\>\left(|\hat c_{j}^{(i-1)}|+|\hat c_j^{(i)}|\right)+
\gamma_2\>\left(|\alpha_i\hat c_{j-1}^{(i-1)}|+
|\beta_i^2 \> \hat c_{j-2}^{(i-2)}|\right).
\end{eqnarray*}
\end{theorem}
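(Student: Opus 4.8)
The plan is to mirror exactly the argument structure already used in the proofs of Theorem \ref{error_1} and Theorem \ref{error_2}, since the recursions for $\hat c_j^{(i)}$ and $\hat c_i^{(i)}$ have the same arithmetic shape as the recursion for $\hat c_2^{(i)}$ — a subtraction of a product term $\alpha_i\hat c_{j-1}^{(i-1)}$ and a product term $\beta_i^2\hat c_{j-2}^{(i-2)}$ from (in the non-diagonal case) a previously computed coefficient $\hat c_j^{(i-1)}$. The only genuinely new feature is that the $\beta_i^2$ term now multiplies a \emph{computed} coefficient $\hat c_{j-2}^{(i-2)}$ rather than a data element $\beta_i^2$, so its error $e_{j-2}^{(i-2)}$ must be tracked through the recursion; this is why the bounds in the statement carry the extra $|\beta_i^2 e_{j-2}^{(i-2)}|$ terms that were absent in Theorem \ref{error_2}.

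First I would handle the diagonal coefficient $\hat c_i^{(i)}=-\fl\bigl[\fl[\alpha_i\hat c_{i-1}^{(i-1)}]+\fl[\beta_i^2\hat c_{i-2}^{(i-2)}]\bigr]$. Applying model (\ref{model1}) to the two multiplications (note $\fl[\beta_i^2\hat c_{i-2}^{(i-2)}]$ is two multiplications, hence a factor $1+\theta_2^{(i)}$ with $|\theta_2^{(i)}|\le\gamma_2$ by Lemma \ref{theta}) and model (\ref{model2}) to the addition yields
$$(1+\epsilon^{(i)})\,\bigl(-\hat c_i^{(i)}\bigr)=\alpha_i\hat c_{i-1}^{(i-1)}(1+\delta^{(i)})+\beta_i^2\hat c_{i-2}^{(i-2)}(1+\theta_2^{(i)}),$$
with $|\delta^{(i)}|,|\epsilon^{(i)}|\le u$. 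I would then substitute $\hat c_{i-1}^{(i-1)}=c_{i-1}^{(i-1)}+e_{i-1}^{(i-1)}$ and $\hat c_{i-2}^{(i-2)}=c_{i-2}^{(i-2)}+e_{i-2}^{(i-2)}$, use the exact recursion $c_i^{(i)}=-\alpha_i c_{i-1}^{(i-1)}-\beta_i^2 c_{i-2}^{(i-2)}$ to cancel the exact terms, isolate $e_i^{(i)}$, and apply the triangle inequality. The $\delta^{(i)}$ term contributes $u|\alpha_i\hat c_{i-1}^{(i-1)}|$, the $\epsilon^{(i)}$ term contributes $u|\hat c_i^{(i)}|$, and the $\theta_2^{(i)}$ term contributes $\gamma_2|\beta_i^2\hat c_{i-2}^{(i-2)}|$; the substituted errors contribute $|\alpha_i e_{i-1}^{(i-1)}|+|\beta_i^2 e_{i-2}^{(i-2)}|$, giving the first displayed bound.

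Next, for the general coefficient $\hat c_j^{(i)}$ with $3\le j\le k$ and $j+1\le i\le n$, I would run the same computation one step longer: model (\ref{model1}) on the two multiplications and on the inner subtraction $\fl[\hat c_j^{(i-1)}-\fl[\alpha_i\hat c_{j-1}^{(i-1)}]]$ gives $\hat c_j^{(i)}=\fl[g_1^{(i)}-g_2^{(i)}]$ with $g_1^{(i)}=\hat c_j^{(i-1)}(1+\delta^{(i)})-\alpha_i\hat c_{j-1}^{(i-1)}(1+\theta_2^{(i)})$ and $g_2^{(i)}=\beta_i^2\hat c_{j-2}^{(i-2)}(1+\theta_2^{(i)})$; then model (\ref{model2}) on the outer subtraction introduces $1+\epsilon^{(i)}$. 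Substituting the error expansions for $\hat c_j^{(i-1)}$, $\hat c_{j-1}^{(i-1)}$, and $\hat c_{j-2}^{(i-2)}$, cancelling the exact recursion, and applying the triangle inequality gives $|e_j^{(i)}|\le|e_j^{(i-1)}|+|\alpha_i e_{j-1}^{(i-1)}|+|\beta_i^2 e_{j-2}^{(i-2)}|+u(|\hat c_j^{(i-1)}|+|\hat c_j^{(i)}|)+\gamma_2(|\alpha_i\hat c_{j-1}^{(i-1)}|+|\beta_i^2\hat c_{j-2}^{(i-2)}|)$, the second displayed bound.

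I do not anticipate a serious obstacle here — the argument is routine once the bookkeeping is set up correctly. The one place to be careful is counting rounding factors: each $\fl[\beta_i^2\hat c_{j-2}^{(i-2)}]$ and each $\fl[\alpha_i\hat c_{j-1}^{(i-1)}]$ is a product of two already-rounded quantities, so it carries a $(1+\theta_2)$ rather than a $(1+\delta)$, which is precisely what produces the $\gamma_2$ coefficients rather than $u$ in front of the product terms; conflating these would give a too-optimistic bound. A secondary subtlety is that the error recursion for $e_j^{(i)}$ now depends on $e_{j-2}^{(i-2)}$ as well as on $e_j^{(i-1)}$ and $e_{j-1}^{(i-1)}$, so when these running bounds are later unrolled one must respect this two-index dependence; but within the statement of this theorem that is not an issue, since we only assert the one-step inequality.
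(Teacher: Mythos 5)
Your proposal is correct and follows essentially the same route as the paper's own proof: model (\ref{model1}) on the multiplications (with $\gamma_2$ for the two-multiplication products via Lemma \ref{theta}), model (\ref{model2}) on the final addition/subtraction, substitution of the error expansions (\ref{e_error}), cancellation of the exact recursion, and the triangle inequality. The only cosmetic difference is that the paper uses distinct symbols $\theta_2^{(i)}$ and $\hat\theta_2^{(i)}$ for the two accumulated rounding factors in the general case, where you reuse one symbol; since both are bounded by $\gamma_2$ this does not affect the result.
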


\begin{proof}
The model (\ref{model1}) implies for the three multiplications that
$$\hat c_i^{(i)}=-\fl \biggl[\alpha_i\hat c_{i-1}^{(i-1)}(1+\delta)+
\beta_i^2\hat c_{i-2}^{(i-2)}(1+\theta_{2})\biggr],$$
where $|\delta|\leq u$ and $|\theta_{2}|\leq \gamma_2$. 
Applying model (\ref{model2}) to the remaining addition gives
$$(1+\epsilon)\hat c_i^{(i)}=
-\alpha_i\hat{c}_{i-1}^{(i-1)}(1+\delta)-\beta_i^2\hat{c}_{i-2}^{(i-2)}
(1+\theta_{2}), \qquad |\epsilon|\leq u.$$
As in the previous proofs, write  
$\hat{c}_i^{(i)}$, $\hat c_{i-1}^{(i-1)}$ and $\hat c_{i-2}^{(i-2)}$ in terms of
their errors (\ref{e_error}), 
$$e_i^{(i)}=-\alpha_ie_{i-1}^{(i-1)}-\beta_i^2 e_{i-2}^{(i-2)}-
\theta_2 \beta_i^2\hat c_{i-2}^{(i-2)}-\delta \alpha_i\hat c_{i-1}^{(i-1)}-
\epsilon \hat c_i^{(i)}.$$
and apply the triangle inequality. 

For $k+1\leq  i\leq n$, applying (\ref{model2}) to the two 
multiplications and the first subtraction gives
$c_j^{(i)}=\fl\left[g_1^{(i)}-g_2^{(i)}\right]$ where 
$$g_1^{(i)}\equiv \hat{c}_j^{(i-1)}(1+\delta^{(i)})-
\alpha_i \hat{c}_{j-1}^{(i-1)}(1+\theta_{2}^{(i)}),\qquad
g_2^{(i)}\equiv \beta_i^2\hat c_{k-2}^{(i-2)}(1+\hat \theta^{(i)}_{2}),$$
$|\delta^{(i)}|\leq u$ and $|\theta_{2}^{(i)}|$,
$|\hat{\theta}^{(i)}_{2}|\leq \gamma_2$.  
Applying model (\ref{model2}) to the remaining subtraction gives
$$(1+\epsilon^{(i)})\hat c_j^{(i)}=
\hat{c}_j^{(i-1)}(1+\delta^{(i)})-\alpha_i \hat c_{j-1}^{(i-1)}(1+\theta_{2}^{(i)})-
\beta_i^2\hat c_{j-2}^{(i-2)}(1+\hat \theta^{(i)}_{2}),$$
where $|\epsilon^{(i)}|\leq u$.
Write the computed coefficients in terms of their errors (\ref{e_error}),
\begin{eqnarray*}
e_j^{(i)}&=&e_j^{(i-1)}+\delta^{(i)}\hat c_j^{(i-1)}-
\theta_{2}^{(i)} \alpha_i \hat{c}_{j-1}^{(i-1)}-\alpha_i e_{j-1}^{(i-1)}-
\beta_i^2 e_{j-2}^{(i-2)}
-\hat{\theta}^{(i)}_{2} \beta_i^2 \hat c_{j-2}^{(i-2)}-
\epsilon^{(i)}\hat c_j^{(i)},
\end{eqnarray*}
and apply the triangle inequality.
\end{proof}

We state the bounds when the leading $k$ coefficients of $p(\lambda)$
are computed by Algorithm 1 in floating point arithmetic.

\begin{corollary}[Error Bounds for $\fl(c_j)$,
 $1\leq j\leq k$]\label{c_srunerr}
If the assumptions in \S \ref{s_ass1} hold, then
$$|\fl[c_j]-c_j|\leq \phi_j,\qquad 1\leq j\leq k,$$
where $\fl[c_j]\equiv \hat{c}_j^{(n)}$ and
$\phi_j\equiv |e_j^{(n)}|$ are given in Theorems \ref{error_1},
\ref{error_2} and \ref{error_3}.
\end{corollary}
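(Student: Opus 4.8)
The plan is to recognize that Corollary \ref{c_srunerr} is nothing more than the assembly of Theorems \ref{error_1}, \ref{error_2} and \ref{error_3} evaluated at the final index $i=n$, together with the bookkeeping of which line of Algorithm 1 produces which coefficient. First I would observe that, by Assumption 3 in \S \ref{s_ass1}, the floating point output of Algorithm 1 for the $j$-th coefficient is $\fl[c_j]\equiv\hat c_j^{(n)}$, while the exact coefficient is $c_j=c_j^{(n)}$. Hence by the error definition (\ref{e_error}),
$$\fl[c_j]-c_j=\hat c_j^{(n)}-c_j^{(n)}=e_j^{(n)},\qquad 1\le j\le k,$$
so it suffices to bound $|e_j^{(n)}|$, and the base case $\hat c_1^{(1)}=-\alpha_1$ is computed without rounding, so $e_1^{(1)}=0$ by Assumption 4.

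Next I would verify that every coefficient-producing line of Algorithm 1 is covered by one of the three theorems: the updates of $\hat c_1^{(i)}$ (lines 2, 4, 14) by Theorem \ref{error_1}; the updates of $\hat c_2^{(i)}$ (lines 2, 6, 16) by Theorem \ref{error_2}; and the updates of $\hat c_j^{(i)}$ for $3\le j\le k$, including the diagonal terms $\hat c_i^{(i)}$ (lines 8, 10, 18), by Theorem \ref{error_3}. Here I would also check that the index ranges close up: the $e_{j-2}^{(i-2)}$ term appearing in Theorem \ref{error_3} reduces for $j=3$ to $e_1^{(i-2)}$, which is governed by Theorem \ref{error_1}, and the two loops $i=3:k$ and $i=k+1:n$ of Algorithm 1, which update different ranges of $j$, are both accounted for.

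Then I would unroll the recursions by induction on $i$. Each of the three theorems bounds $|e_j^{(i)}|$ by a sum of a few lower-index error terms $|e_\cdot^{(i-1)}|$, $|e_\cdot^{(i-2)}|$ together with quantities — the computed coefficients $\hat c_\cdot^{(\cdot)}$, the matrix entries $\alpha_i$, $\beta_i$, and the constants $u$ and $\gamma_2$ — that are all available during the run of the algorithm. Starting from $e_1^{(1)}=0$ and the exactly computed initial coefficients, at stage $i$ the lower-index error terms have already been replaced by computable upper bounds, so the right-hand side of the relevant theorem's inequality becomes a fully computable number; call it $\phi_j^{(i)}$ and set $\phi_j\equiv\phi_j^{(n)}$. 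By induction $|e_j^{(i)}|\le\phi_j^{(i)}$ for all admissible $i,j$, and in particular $|e_j^{(n)}|\le\phi_j$; combining this with the display above yields $|\fl[c_j]-c_j|\le\phi_j$ for $1\le j\le k$.

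The main obstacle is entirely in the second step: making the line-by-line correspondence between Algorithm 1 and the three theorems complete and unambiguous, and confirming that the various edge cases (the mixed line 2 that produces both $\hat c_1^{(2)}$ and $\hat c_2^{(2)}$, the diagonal entries $\hat c_i^{(i)}$, and the restriction to $j\le k$ in the second loop) are all subsumed. Once that is in place the remaining argument is a routine induction with no analytic content.
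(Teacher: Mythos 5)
Your proposal is correct and matches the paper's (implicit) argument: the corollary is stated without proof precisely because it is the immediate assembly of Theorems \ref{error_1}, \ref{error_2} and \ref{error_3} at $i=n$, using $\fl[c_j]-c_j=e_j^{(n)}$ from (\ref{e_error}) and unrolling the recursive bounds from $e_1^{(1)}=0$. Your additional bookkeeping of which algorithm lines each theorem covers is a faithful expansion of what the paper leaves tacit.
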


\section{Nonsymmetric Matrices}\label{s_nonsym}
In the first stage, La Budde's method \cite{Giv57}
reduces a real square matrix to upper Hessenberg form $H$.
In the second stage it computes the coefficients
of the characteristic polynomial of $H$.
We present recursions to compute individual coefficients in 
the second stage of La Budde's method
in \S \ref{s_lab}, and derive running error bounds in \S \ref{s_labrun}.

\subsection{The Algorithm}\label{s_lab}
We present an implementation of the second stage of La Budde's method
for nonsymmetric matrices.
Let 
$$H=\begin{pmatrix}\alpha_1 & h_{12} & \ldots & \ldots & h_{1n}\\
                   \beta_2& \alpha_2& h_{23}& &\vdots\\
                          &\ddots &\ddots& \ddots&\vdots  \\
                           & & \ddots &\ddots &h_{n-1,n}\\
                          & & & \beta_n & \alpha_n
\end{pmatrix}$$
be a real $n\times n$ upper Hessenberg matrix with 
diagonal elements $\alpha_i$, subdiagonal elements $\beta_i$,
and characteristic polynomial $p(\lambda)\equiv\det(\lambda I -H)$.

La Budde's method computes the characteristic
polynomial of an upper Hessenberg matrix $H$ by successively computing
characteristic polynomials of 
leading principal submatrices $H_i$ of order $i$ \cite{Giv57}. 
Denote the characteristic polynomial of
$H_i$ by $p_i(\lambda)=\det(\lambda I-H_i)$, $1\leq i\leq n$,
where $p(\lambda)=p_n(\lambda)$.
The recursion for computing $p(\lambda)$ is \cite[(6.57.1)]{Wil65}
\begin{eqnarray}\label{e_uh}
p_0(\lambda)&=&1, \quad p_1(\lambda)=\lambda-\alpha_1\nonumber\\
p_i(\lambda) &= &(\lambda-\alpha_i)p_{i-1}(\lambda)-
\sum_{m=1}^{i-1}{h_{i-m,i}\>\beta_i\cdots \beta_{i-m+1}\>p_{i-m-1}(\lambda)},
\end{eqnarray}
where $2\leq i\leq n$.
The recursion for $p_i(\lambda)$ is obtained by developing the determinant
of $\lambda I-H_i$ along the last row of $H_i$. Each term in the
sum contains an element in the last column of $H_i$ and a product of 
subdiagonal elements.

As in the symmetric case, we let
$$p(\lambda)=\lambda^n+c_1\lambda^{n-1}+\cdots+c_{n-1}\lambda+c_n$$
and
$$p_i{(\lambda)}=
\lambda^i+c_1^{(i)}\lambda^{i-1}+\cdots+c_{i-1}^{(i)}\lambda+c_i^{(i)},
\qquad 1\leq i\leq n,$$
where $c_k^{(n)}=c_k$. Equating like powers of $\lambda$ in (\ref{e_uh})
gives recursions for individual coefficients $c_k$, which are
presented as Algorithm 2.
In the process, $c_1,\ldots,c_{k-1}$ are also computed.

\begin{algorithm}  
\caption{La Budde's method for upper Hessenberg matrices}
\algsetup{indent=1em}
\begin{algorithmic}[1]
\REQUIRE  $n \times n$ real upper Hessenberg matrix $H$, index $k$  
\ENSURE    Coefficient $c_k$ of $p(\lambda)$ 
\STATE $c_1^{(1)}=-\alpha_1$
\STATE $c_1^{(2)}=c_1^{(1)}-\alpha_2$, 
$c_{2}^{(2)}=\alpha_1\alpha_2- h_{12}\beta_2$
\FOR {$i=3:k$}
\STATE $c_1^{(i)}=c_1^{(i-1)}-\alpha_i$
\FOR {$j=2:i-1$}
\STATE \mbox{\small $c_j^{(i)}=c_j^{(i-1)}-\alpha_ic_{j-1}^{(i-1)}-
\sum_{m=1}^{j-2}{h_{i-m,i}\>\beta_i\cdots \beta_{i-m+1}\>c_{j-m-1}^{(i-m-1)}} - 
h_{i-j+1,i}\>\beta_i\cdots\beta_{i-j+2}$}
\ENDFOR
\STATE \mbox{$c_i^{(i)}=-\alpha_ic_{i-1}^{(i-1)}-
\sum_{m=1}^{i-2}{h_{i-m,i}\>\beta_i\cdots \beta_{i-m+1}\>c_{i-m-1}^{(i-m-1)}} - 
h_{1i}\>\beta_i\cdots\beta_{2}$}
\ENDFOR
\FOR {$i=k+1:n$}
\STATE $c_1^{(i)}=c_1^{(i-1)}-\alpha_i$
\IF{$k\geq 2$}
\FOR {$j=2:k$}
\STATE \mbox{\small $c_j^{(i)}=c_j^{(i-1)}-\alpha_ic_{j-1}^{(i-1)}-
\sum_{m=1}^{j-2}{h_{i-m,i}\>\beta_i\cdots \beta_{i-m+1}\>c_{j-m-1}^{(i-m-1)}} - 
h_{i-j+1,i}\>\beta_i\cdots\beta_{i-j+2}$}
\ENDFOR
\ENDIF
\ENDFOR
\STATE \COMMENT{Now  $c_j=c_j^{(n)}$, $1\leq j\leq k$}
\end{algorithmic}
\end{algorithm} 

For the special case when $H$ is symmetric and tridiagonal, Algorithm 2 
reduces to Algorithm 1. Figure \ref{f_ex1} shows an example of
the recursions for $n=5$ and $k=3$.

\begin{figure}
{\small
$$\begin{array}{l||l|lr|lr}
i & c_1^{(i)} & c_2^{(i)} && c_3^{(i)}&\\
\hline\hline
1 & c_1^{(1)}= -\alpha_1&&\\
2 & c_1^{(2)} = c_1^{(1)}-\alpha_2 &c_2^{(2)}=&\alpha_1\alpha_2-h_{12}\beta_2\\
3 & c_1^{(3)} = c_1^{(2)}-\alpha_3 &
c_2^{(3)}=&c_2^{(2)}-\alpha_3c_1^{(2)}-h_{23}\beta_3& 
c_3^{(3)}=&-\alpha_3c_2^{(2)}-h_{23}\beta_3c_1^{(1)}-h_{13}\beta_3\beta_2\\
4 & c_1^{(4)} = c_1^{(3)}-\alpha_4 &
c_2^{(4)}=&c_2^{(3)}-\alpha_4c_1^{(3)}-h_{34}\beta_4& 
c_3^{(4)}=&c_3^{(3)}-\alpha_4c_2^{(3)}-h_{34}\beta_4c_1^{(2)}
-h_{24}\beta_4\beta_3\\
5 & c_1^{(5)} = c_1^{(4)}-\alpha_5 &
c_2^{(5)}=&c_2^{(4)}-\alpha_5c_1^{(4)}-h_{45}\beta_5& 
c_3^{(5)}=&c_3^{(4)}-\alpha_5c_2^{(4)}-h_{45}\beta_5c_1^{(3)}
-h_{35}\beta_5\beta_4
\end{array}$$
}
\caption{Coefficients Computed by Algorithm 2 when $n=5$ and 
$k=3$.}\label{f_ex1}
\end{figure}

Algorithm 2 computes the characteristic polynomial of companion matrices
exactly. To see this, consider the $n\times n$ companion matrix of the form
$$ \begin{pmatrix}0&       & & -c_n\\
                    1&\ddots&  &\vdots     \\
                     &\ddots&0  & -c_2\\
                     &      &1 &-c_1\end{pmatrix}.$$ 
Algorithm 2 computes 
$c_j^{(i)}=0$ for $1\leq j\leq n$ and $1\leq i\leq n-1$,
so that $c_j^{(n)}=c_j$. Since only trivial arithmetic operations are 
performed, Algorithm 2 computes
the characteristic polynomial exactly.

\subsection{Running Error Bounds}\label{s_labrun}
We present running error bounds for the coefficients of $p(\lambda)$
of a real Hessenberg matrix $H$. 

The bounds below apply to lines 2, 4, and 11 of Algorithm 2.

\begin{theorem}[Error bounds for $\hat{c}_1^{(i)}$]\label{t_1}
If the assumptions in \S \ref{s_ass1} hold and 
$$\hat{c}_1^{(i)}=\fl\left[\hat{c}_1^{(i-1)}-\alpha_i\right],
\qquad 2\leq i\leq n,$$
then
$$|e_1^{(i)}|\leq |e_1^{(i-1)}|+u\>|\hat c_1^{(i)}|, \qquad 2\leq i\leq n.$$
\end{theorem}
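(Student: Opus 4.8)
The plan is to mirror the proof of Theorem~\ref{error_1}, since the recursion for $\hat c_1^{(i)}$ in Algorithm~2 is word-for-word the same as the one in Algorithm~1. First I would apply the floating point model~(\ref{model2}) to the single subtraction $\hat c_1^{(i-1)}-\alpha_i$, obtaining $(1+\epsilon^{(i)})\,\hat c_1^{(i)}=\hat c_1^{(i-1)}-\alpha_i$ with $|\epsilon^{(i)}|\leq u$. Here I use that $\alpha_i$ is a normalized real floating point number, hence stored exactly, so that no rounding error enters before the subtraction and a single $\epsilon^{(i)}$ accounts for the whole operation.

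Next I would bring in the exact recursion $c_1^{(i)}=c_1^{(i-1)}-\alpha_i$, which follows by equating the coefficients of $\lambda^{i-1}$ on both sides of~(\ref{e_uh}); the summation term in~(\ref{e_uh}) contributes only to the powers $\lambda^{i-2}$ and lower, so it does not affect $c_1^{(i)}$. Substituting the error decomposition~(\ref{e_error}) in the forms $\hat c_1^{(i)}=c_1^{(i)}+e_1^{(i)}$ and $\hat c_1^{(i-1)}=c_1^{(i-1)}+e_1^{(i-1)}$ into the floating point identity, and cancelling the common exact recursion, gives $e_1^{(i)}=e_1^{(i-1)}-\epsilon^{(i)}\hat c_1^{(i)}$.

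Finally, the triangle inequality together with $|\epsilon^{(i)}|\leq u$ yields $|e_1^{(i)}|\leq|e_1^{(i-1)}|+u\,|\hat c_1^{(i)}|$, which is the claimed bound. I do not anticipate any genuine obstacle here: the argument is essentially a one-line use of the roundoff model. The only point worth care is that this is a \emph{running} bound, expressed through the \emph{computed} quantity $|\hat c_1^{(i)}|$ rather than through the exact $|c_1^{(i)}|$, so the floating point identity must be combined only with the error identity~(\ref{e_error}) and no a priori estimate of $|\hat c_1^{(i)}|$ should be introduced; this also explains why the recursion for $|e_1^{(i)}|$ inherits the same simple form as in the symmetric case.
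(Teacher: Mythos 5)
Your proposal is correct and follows exactly the route the paper takes: the paper's proof of Theorem~\ref{t_1} simply refers back to Theorem~\ref{error_1}, whose argument is precisely your application of model~(\ref{model2}) to the single subtraction, substitution of the error decomposition~(\ref{e_error}) against the exact recursion $c_1^{(i)}=c_1^{(i-1)}-\alpha_i$, and the triangle inequality. Your added observation that the sum in~(\ref{e_uh}) does not touch the $\lambda^{i-1}$ coefficient is a correct (if implicit in the paper) justification for why the Hessenberg case reduces to the tridiagonal one here.
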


\begin{proof}
The proof is the same as that of Theorem \ref{error_1}.
\end{proof}

The following bounds apply to line 2 of Algorithm 2,
as well as lines 6 and 14 for the case $j=2$.

\begin{theorem}[Error bounds for $\hat{c}_2^{(i)}$]\label{t_2}
If the assumptions in \S \ref{s_ass1} hold, and
\begin{eqnarray*}
\hat{c}_2^{(2)}&=&\fl\biggl[\fl\left[\alpha_1\alpha_2\right]-
\fl\left[h_{12}\>\beta_2\right]\biggr]\\
\hat c_2^{(i)}&=&\fl\biggl[\fl\left[\hat c_2^{(i-1)}-
\fl\left[\alpha_i\hat{c}_1^{(i-1)}\right]\right]-
\fl\left[h_{i-1,i}\>\beta_i\right]\biggr], \qquad 3\leq i\leq n,
\end{eqnarray*}
then
\begin{eqnarray*}
|e_2^{(2)}|& \leq &u\>\left(|\alpha_2\alpha_1|+|h_{12}\beta_2|+
|\hat c_2^{(2)}|\right)\\
|e_2^{(i)}|&\leq &|e_2^{(i-1)}|+|\alpha_ie_1^{(i-1)}|+
u\>\left(|\hat c_2^{(i-1)}|+|h_{i-1,i}\>\beta_i|+|\hat c_2^{(i)}|\right)+
\gamma_2\>|\alpha_i\hat{c}_1^{(i-1)}|.
\end{eqnarray*}
\end{theorem}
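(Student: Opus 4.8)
The plan is to follow the template of the proof of Theorem~\ref{error_2}, replacing the symmetric term $\beta_i^2$ by the Hessenberg term $h_{i-1,i}\beta_i$. For the base case $i=2$, I would first apply the model~(\ref{model1}) to the two multiplications, writing $\fl[\alpha_1\alpha_2]=\alpha_2\alpha_1(1+\delta)$ and $\fl[h_{12}\beta_2]=h_{12}\beta_2(1+\eta)$ with $|\delta|,|\eta|\leq u$, and then apply~(\ref{model2}) to the subtraction to obtain $(1+\epsilon)\hat c_2^{(2)}=\alpha_2\alpha_1(1+\delta)-h_{12}\beta_2(1+\eta)$ with $|\epsilon|\leq u$. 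Writing $\hat c_2^{(2)}$ in terms of its error via~(\ref{e_error}) and using the exact value $c_2^{(2)}=\alpha_1\alpha_2-h_{12}\beta_2$ gives $e_2^{(2)}=\delta\,\alpha_2\alpha_1-\eta\,h_{12}\beta_2-\epsilon\hat c_2^{(2)}$, and the triangle inequality yields the stated bound $|e_2^{(2)}|\leq u(|\alpha_2\alpha_1|+|h_{12}\beta_2|+|\hat c_2^{(2)}|)$.

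For $3\leq i\leq n$ I would peel off the arithmetic operations from the innermost parenthesization outward. Model~(\ref{model1}) applied to the multiplication $\alpha_i\hat c_1^{(i-1)}$ and then to the inner subtraction produces a factor $(1+\theta_2^{(i)})$ multiplying $\alpha_i\hat c_1^{(i-1)}$, bounded by $\gamma_2$ through Lemma~\ref{theta}, while $\hat c_2^{(i-1)}$ picks up only a single factor $(1+\delta^{(i)})$ with $|\delta^{(i)}|\leq u$; model~(\ref{model1}) applied to the multiplication $h_{i-1,i}\beta_i$ gives a factor $(1+\eta^{(i)})$ with $|\eta^{(i)}|\leq u$; finally model~(\ref{model2}) on the outer subtraction gives $(1+\epsilon^{(i)})\hat c_2^{(i)}=\hat c_2^{(i-1)}(1+\delta^{(i)})-\alpha_i\hat c_1^{(i-1)}(1+\theta_2^{(i)})-h_{i-1,i}\beta_i(1+\eta^{(i)})$ with $|\epsilon^{(i)}|\leq u$. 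Substituting $\hat c_2^{(i-1)}=c_2^{(i-1)}+e_2^{(i-1)}$ and $\hat c_1^{(i-1)}=c_1^{(i-1)}+e_1^{(i-1)}$, using the exact recursion $c_2^{(i)}=c_2^{(i-1)}-\alpha_i c_1^{(i-1)}-h_{i-1,i}\beta_i$, and cancelling the exact terms yields
$$e_2^{(i)}=e_2^{(i-1)}+\delta^{(i)}\hat c_2^{(i-1)}-\alpha_i e_1^{(i-1)}-\theta_2^{(i)}\alpha_i\hat c_1^{(i-1)}-\eta^{(i)}h_{i-1,i}\beta_i-\epsilon^{(i)}\hat c_2^{(i)}.$$
The triangle inequality together with $|\delta^{(i)}|,|\eta^{(i)}|,|\epsilon^{(i)}|\leq u$ and $|\theta_2^{(i)}|\leq\gamma_2$ then gives exactly the claimed running bound.

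The step I would flag as the main obstacle is the bookkeeping of the rounding factors on the term $\alpha_i\hat c_1^{(i-1)}$: it is rounded once in the product and then enters the inner subtraction, so it accumulates the combined factor $(1+\theta_2^{(i)})$ with $|\theta_2^{(i)}|\leq\gamma_2$, whereas $\hat c_2^{(i-1)}$ enters the inner subtraction without prior rounding and so carries only the single factor $(1+\delta^{(i)})$. Getting this accounting right is what distinguishes the $\gamma_2$ term from the $u$ terms in the bound. One should also note that the error $e_1^{(i-1)}$ propagates through multiplication by the exact $\alpha_i$, which is why it enters the bound as $|\alpha_i e_1^{(i-1)}|$ with no rounding inflation, exactly as in Theorem~\ref{error_2}. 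Everything else is a routine application of the standard model~(\ref{model1})--(\ref{model2}) and Lemma~\ref{theta}.
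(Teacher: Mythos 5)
Your proposal is correct and follows exactly the route the paper takes: the paper's proof of Theorem~\ref{t_2} simply states that it is the same as that of Theorem~\ref{error_2}, with $\beta_i^2$ replaced by $h_{i-1,i}\beta_i$, and your derivation reproduces that argument faithfully, including the key bookkeeping that $\alpha_i\hat c_1^{(i-1)}$ accumulates a $(1+\theta_2^{(i)})$ factor while $\hat c_2^{(i-1)}$ and $h_{i-1,i}\beta_i$ each carry only a single rounding factor.
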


\begin{proof}
The proof is the same as that of Theorem \ref{error_2}.
\end{proof}

The bounds below apply to lines 6, 8, and 14 of Algorithm 2.

\begin{theorem}[Error bounds for $\hat{c}_j^{(i)}$, $3\leq j\leq k$]\label{t_3}
If the assumptions in \S \ref{s_ass1} hold, 
\begin{eqnarray*}
 \hat c_i^{(i)}=-\fl\left[\fl\left[\alpha_i\hat c_{i-1}^{(i-1)}\right]+
\fl\left[\sum_{m=1}^{i-2}{h_{i-m,i}\>\beta_i\cdots \beta_{i-m+1}\>
\hat c_{i-m-1}^{(i-m-1)}}+h_{1i}\>\beta_i\cdots\beta_2\right]\right],
\end{eqnarray*}
$3\leq i\leq k$, and
\begin{eqnarray*}
\hat c_j^{(i)}&=&\fl\left[\fl\left[\hat c_j^{(i-1)}-
\fl\left[\alpha_i\hat c_{j-1}^{(i-1)}\right]\right]\right.\\
&& \qquad \left.  -
\fl\left[\sum_{m=1}^{j-2}{h_{i-m,i}\>\beta_i\cdots \beta_{i-m+1}\>
\hat c_{i-m-1}^{(i-m-1)}}-h_{i-j+1}\>\beta_i\cdots \beta_{i-j+2}\right]\right],
\end{eqnarray*}
$3\leq j\leq k$, $j+1\leq i\leq n$, then
\begin{eqnarray*}
|e_i^{(i)}|&\leq & |\alpha_ie_{i-1}^{(i-1)}|+
\sum_{m=1}^{i-2}{|h_{i-m,i}\>\beta_i\cdots \beta_{i-m+1}\>e_{i-m-1}^{(i-m-1)}|}\\
&& + \gamma_{i+1}\>\sum_{m=2}^{i-2}{|h_{i-m,i}\>\beta_i\cdots \beta_{i-m+1}\>
\hat c_{i-m-1}^{(i-m-1)}|}\\
&& + \gamma_{i}\>\left(|h_{i-1,i}\>\beta_i\>\hat c_{i-2}^{(i-2)}|+
|h_{1i}\>\beta_i\cdots \beta_2|\right)
+u\> \left(|\hat c_i^{(i)}|+|\alpha_i\hat c_{i-1}^{(i-1)}|\right)\\
\end{eqnarray*}
and 
\begin{eqnarray*}
|e_j^{(i)}|&\leq &|e_j^{(i-1)}|+|\alpha_ie_{j-1}^{(i-1)}|+
\sum_{m=1}^{j-2}{|h_{i-m,i}\>\beta_i\cdots \beta_{i-m+1}\> e_{j-m-1}^{(i-m-1)}|}\\
&&+\gamma_{j+1}\>\left(\sum_{m=2}^{j-2}{|h_{i-m,i}\>\beta_i\cdots \beta_{i-m+1}\>
\hat{c}_{j-m-1}^{(i-m-1)}|}\right)\\
&&+\gamma_{j}\>\left(|h_{i-1,i}\>\beta_i \>\hat c_{j-2}^{(i-2)}|+
|h_{i-j+1,i}\>\beta_i\cdots \beta_{i-j+2}|\right)\\
&&+u \> \left(|\hat c_j^{(i)}|+|\hat c_j^{(i-1)}|\right)
+\gamma_{2} \>|\alpha_i\hat c_{j-1}^{(i-1)}|.
\end{eqnarray*}
\end{theorem}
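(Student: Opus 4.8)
The plan is to follow exactly the template established in the proofs of Theorems \ref{error_1}, \ref{error_2}, and \ref{error_3}: apply the standard models (\ref{model1})--(\ref{model2}) to each floating-point operation in the stated recursions, collect the accumulated rounding factors into $\theta$-terms using Lemma \ref{theta}, rewrite every computed quantity $\hat c_\ell^{(m)}$ in terms of its exact value plus error $e_\ell^{(m)}$ via (\ref{e_error}), cancel the exact recursion (\ref{e_uh}) against itself, and finish with the triangle inequality. The new feature here, compared to the symmetric case, is the inner sum $\sum_{m=1}^{j-2} h_{i-m,i}\beta_i\cdots\beta_{i-m+1}\,\hat c_{j-m-1}^{(i-m-1)}$ together with the trailing term $h_{i-j+1,i}\beta_i\cdots\beta_{i-j+2}$, so the bulk of the work is a careful accounting of how many rounding errors contaminate each of these terms.

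First I would treat the diagonal entry $\hat c_i^{(i)}$. Form the product $\alpha_i\hat c_{i-1}^{(i-1)}$ (one rounding, factor $1+\delta$, $|\delta|\le u$). For the summation term, note that forming the chained product $h_{i-m,i}\beta_i\cdots\beta_{i-m+1}$ costs $m$ multiplications, then one more to multiply by $\hat c_{i-m-1}^{(i-m-1)}$, and then summing the $i-2$ such terms in sequence costs another $i-3$ additions plus one for the trailing $h_{1i}\beta_i\cdots\beta_2$ term; tallying these, the $m$th summand picks up a $(1+\theta)$ factor with $|\theta|\le\gamma_{i+1}$ for $m\ge 2$, while the $m=1$ summand $h_{i-1,i}\beta_i\hat c_{i-2}^{(i-2)}$ and the trailing product each get $|\theta|\le\gamma_i$ (the $m=1$ summand is cheaper because it enters the partial sum earliest and has the shortest product). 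Then the final addition combining $\alpha_i\hat c_{i-1}^{(i-1)}$ with the accumulated sum contributes the overall factor $1/(1+\epsilon)$, $|\epsilon|\le u$. Substituting (\ref{e_error}), the exact recursion cancels and one is left with $e_i^{(i)}$ expressed as $-\alpha_i e_{i-1}^{(i-1)}$, minus the $\beta$-product-weighted errors $\sum h_{i-m,i}\beta_i\cdots\beta_{i-m+1}\,e_{i-m-1}^{(i-m-1)}$, minus the $\theta$-terms times the \emph{computed} coefficients, minus $\delta\alpha_i\hat c_{i-1}^{(i-1)}$ and $\epsilon\hat c_i^{(i)}$; the triangle inequality with $|\theta|\le\gamma_{i+1}$ or $\gamma_i$ as appropriate yields the first displayed bound.

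The off-diagonal case $\hat c_j^{(i)}$ is structurally identical, with the single extra term $\hat c_j^{(i-1)}$ carried along through one subtraction (factor $1+\delta^{(i)}$, $|\delta^{(i)}|\le u$) and one final subtraction (factor $1/(1+\epsilon^{(i)})$). Because the sum now has only $j-2$ terms and the trailing term is $h_{i-j+1,i}\beta_i\cdots\beta_{i-j+2}$ (a shorter $\beta$-product than in the diagonal case), the counting gives $\gamma_{j+1}$ on the $m\ge 2$ summands and $\gamma_j$ on the $m=1$ summand $h_{i-1,i}\beta_i\hat c_{j-2}^{(i-2)}$ and on the trailing product, exactly as stated; the $\alpha_i\hat c_{j-1}^{(i-1)}$ product contributes only $\gamma_2$ since it passes through just the two subtractions. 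I expect the main obstacle to be bookkeeping the $\gamma$-subscripts correctly: the number of roundoffs attached to a given summand depends both on the length $m$ of its $\beta$-chain and on its position in the left-to-right accumulation of the sum, and getting the worst case ($\gamma_{j+1}$ versus $\gamma_j$ versus $\gamma_2$) requires fixing a definite evaluation order for the \fl\ applied to the sum and verifying that the stated bounds dominate it; everything else is a mechanical repetition of the earlier proofs, so I would simply write ``the proof is analogous to that of Theorem \ref{error_3}, with the sum evaluated left to right'' and record only the modified error-propagation identity before applying the triangle inequality.
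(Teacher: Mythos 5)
Your proposal follows the paper's proof essentially verbatim: apply the models (\ref{model1})/(\ref{model2}) to each operation, use Lemma \ref{theta} to collect the roundoffs with the sum accumulated left to right, substitute the errors via (\ref{e_error}), and finish with the triangle inequality, with every $\gamma$-subscript ($\gamma_{j+1}$ for the $2\leq m\leq j-2$ summands, $\gamma_j$ for the $m=1$ summand and the trailing $\beta$-product, $\gamma_2$ for $\alpha_i\hat c_{j-1}^{(i-1)}$, and $u$ for the lone $\delta$ and $\epsilon$) tallied exactly as the paper does. One small correction to your parenthetical: the $m=1$ summand is cheaper not because it enters the partial sum earliest---entering earliest makes it participate in the \emph{most} additions, namely all of them---but solely because its product costs one fewer multiplication than the $m=2$ summand, which participates in the same number of additions.
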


\begin{proof}
The big sum in $\hat c_i^{(i)}$ contains $i-2$ products, where
each product consists of $m+2$ numbers.  For the $m+1$ multiplications in 
such a product, the model (\ref{model1}) and Lemma \ref{theta} imply
\begin{eqnarray*}
g_m\equiv\fl\left[h_{i-m,i}\>\beta_i\cdots \beta_{i-m+1}\>
\hat c_{i-m-1}^{(i-m-1)}\right]
=h_{i-m,i}\>\beta_i\cdots \beta_{i-m+1}\>\hat c_{i-m-1}^{(i-m-1)}(1+\theta_{m+1}), 
\end{eqnarray*}
where $|\theta_{m+1}|\leq \gamma_{m+1}$ and $1\leq m\leq i-2$.
The term $h_{1i}\>\beta_i\cdots\beta_2$ is a product of $i$ numbers, so that
$$g_{i-1}\equiv\fl\left[h_{1i}\>\beta_i\cdots \beta_2\right]
=h_{1i}\>\beta_i\cdots \beta_2(1+\theta_{i-1}),$$
where $|\theta_{i-1}|\leq \gamma_{i-1}$.
Adding the $i-1$ products $g_m$ from left to right, so that
$$g\equiv\fl\left[ \ldots \fl\left[ \fl\left[g_1+g_2\right]+g_3\right]
\cdots +g_{i-1}\right],$$
gives, again with (\ref{model1}), the relation
\begin{eqnarray*}
&&g=h_{i-1,i}\>\beta_i\>\hat c_{i-2}^{(i-2)}(1+\theta_i)
+\sum_{m=2}^{i-2}{h_{i-m,i}\>\beta_i\cdots \beta_{i-m+1}\>
\hat c_{i-m-1}^{(i-m-1)}}\left(1+\theta_{i+1}^{(m)}\right)\\
&& \qquad +h_{1i}\>\beta_i\cdots \beta_2\>\left(1+\hat \theta_{i}\right),
\end{eqnarray*}
where $|\theta_{i+1}^{(m)}|\leq \gamma_{i+1}$ and  
$|\theta_i|, |\hat \theta_i|\leq \gamma_i$. 
For the very first term in $\hat{c}_i^{(i)}$ we get
$\fl\left[\alpha_i\hat c_{i-1}^{(i-1)}\right]=
\alpha_i\hat c_{i-1}^{(i-1)}(1+\delta)$, where $|\delta|\leq u$.
Adding this term to $g$ and using model (\ref{model2}) yields
\begin{eqnarray*}
-(1+\epsilon)\hat c_i^{(i)}&=&\alpha_i\hat c_{i-1}^{(i-1)}(1+\delta)+
h_{i-1,i}\>\beta_i\>\hat c_{i-2}^{(i-2)}(1+\theta_i)+
h_{1i}\>\beta_i\cdots \beta_2\>(1+\hat \theta_i)\\
&&+\sum_{m=2}^{i-2}{h_{i-m,i}\beta_i\cdots \beta_{i-m+1}\>
\hat c_{i-m-1}^{(i-m-1)}\left(1+\theta_{i+1}^{(m)}\right)},
\end{eqnarray*}
where $|\epsilon|\leq u$. 
Write the computed coefficients in terms of their errors (\ref{e_error})
\begin{eqnarray*}
-e_i^{(i)}&=&\alpha_ie_{i-1}^{(i-1)}+
\sum_{m=1}^{i-2}{h_{i-m,k}\>\beta_i\cdots \beta_{i-m+1}\>
e_{i-m-1}^{(i-m-1)}}+\alpha_i\hat c_{i-1}^{(i-1)}\>\delta\\
&&+h_{i-1,i}\>\beta_i\>\hat c_{i-2}^{(i-2)}\theta_i
+\sum_{m=2}^{i-2}{h_{i-m,i}\>\beta_i\cdots \beta_{i-m+1}
\>\hat c_{i-m-1}^{(i-m-1)}\>\theta_{i+1}^{(m)}}\\
&&+h_{1i}\>\beta_i\cdots \beta_2\>\hat\theta_i
+\epsilon\>\hat c_i^{(i)},
\end{eqnarray*}
and then apply the triangle inequality.

For $j+1\leq i$, $\hat{c}_j^{(i)}$ contains the additional term 
$\hat c_j^{(i-1)}$, which is involved in the first subtraction.
Model (\ref{model1}) implies
$$\fl\left[\hat c_j^{(i-1)}-\fl\left[\alpha_i\hat c_{j-1}^{(i-1)}\right]
\right]=\hat c_j^{(i-1)}\>\left(1+\delta^{(i)}\right)-
\alpha_i\hat c_{j-1}^{(i-1)}\left(1+\theta_2^{(i)}\right),$$
where $|\delta^{(i)}|\leq u$ and $|\theta_2^{(i)}|\leq \gamma_2$. 
From this we subtract $g$ which is computed as in the case $j=i$.
\end{proof} 

Finally we can state bounds when the leading $k$ coefficients of $p(\lambda)$
are computed by Algorithm 2 in floating point arithmetic.

\begin{corollary}[Error Bounds for $\fl(c_j)$, $1\leq j\leq k$]\label{c_runerr}
If the assumptions in \S \ref{s_ass1} hold, then
$$|\fl[c_j]-c_j|\leq \rho_j,\qquad 1\leq j\leq k,$$
where $\fl[c_j]\equiv \hat c_j^{(n)}$ and
$\rho_j\equiv |e_j^{(n)}|$ are given in Theorems \ref{t_1},
\ref{t_2} and \ref{t_3}.
\end{corollary}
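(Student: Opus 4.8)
The plan is to derive Corollary \ref{c_runerr} as a direct bookkeeping consequence of Theorems \ref{t_1}, \ref{t_2} and \ref{t_3}: each of those theorems supplies a one-step recursion for $|e_j^{(i)}|$, and the corollary only has to unroll these recursions up to $i=n$.

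First I would record the trivial reduction. By Assumption~3 the floating-point output of Algorithm~2 is $\fl[c_j]=\hat c_j^{(n)}$, and by the error convention (\ref{e_error}) we have $\hat c_j^{(n)}=c_j^{(n)}+e_j^{(n)}$ with $c_j^{(n)}=c_j$. Hence $\fl[c_j]-c_j=e_j^{(n)}$, so the claim is exactly $|\fl[c_j]-c_j|=|e_j^{(n)}|=\rho_j$. What remains is to check that the $\rho_j$ are well defined, i.e.\ that the recursions in Theorems \ref{t_1}--\ref{t_3}, started from $e_1^{(1)}=0$, really produce computable numbers bounding $|e_j^{(i)}|$ for every $i$ up to $n$.

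Second I would organize this as a double induction: an outer induction on the coefficient index $j$ and, for each fixed $j$, an inner unrolling over the submatrix order $i$. The outer induction is forced by the structure of the recursions, since the bounds for $e_j^{(i)}$ in Theorems \ref{t_2} and \ref{t_3} involve the lower-order errors $e_{j-1}^{(i-1)}$ and $e_{j-2}^{(i-2)}$, which must already have been bounded at stages $j-1$ and $j-2$. For $j=1$, Theorem \ref{t_1} gives $|e_1^{(i)}|\le|e_1^{(i-1)}|+u|\hat c_1^{(i)}|$ for $2\le i\le n$, and with $e_1^{(1)}=0$ this telescopes to $\rho_1=|e_1^{(n)}|\le u\sum_{i=2}^{n}|\hat c_1^{(i)}|$. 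For $j=2$ I would substitute the bound just obtained for $|e_1^{(i-1)}|$ into the recursion of Theorem \ref{t_2} and unroll it from $i=2$ to $i=n$, obtaining $\rho_2=|e_2^{(n)}|$ as an accumulated sum of the computed quantities $|\hat c_2^{(i)}|$, $|\hat c_2^{(i-1)}|$, $|h_{i-1,i}\beta_i|$, $|\alpha_i\hat c_1^{(i-1)}|$ and $|\alpha_ie_1^{(i-1)}|$. For $3\le j\le k$ I would start the inner recursion at $i=j$ using the first inequality of Theorem \ref{t_3} (the bound for the ``diagonal'' coefficient $\hat c_j^{(j)}$ of line~8), then chain the second inequality of Theorem \ref{t_3} for $i=j+1,\ldots,n$ (lines~6 and~14), at each step replacing $|e_{j-1}^{(i-1)}|$ and $|e_{j-2}^{(i-2)}|$ by the bounds produced at the earlier outer-induction stages. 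Setting $\rho_j\equiv|e_j^{(n)}|$ for the resulting accumulated quantity finishes the argument; one checks along the way that the index ranges used match the line-by-line coverage stated just before each theorem, and in particular that the shorter recursions on lines~11--15 (valid for $k+1\le i\le n$) are subsumed, since Theorems \ref{t_1}--\ref{t_3} are already stated for the whole range $i\le n$.

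I would close by stressing the \emph{running} nature of the bounds: each $\rho_j$ is accumulated from the quantities $\hat c$, $h$, $\beta$ that are available while Algorithm~2 executes, so in practice one carries a single extra scalar recursion alongside each coefficient recursion rather than forming any a priori estimate — which is what makes the corollary usable. The main obstacle is purely organizational: making the nested sums of Theorem \ref{t_3} unroll cleanly in $i$ and keeping the superscripts straight when substituting the lower-order bounds (not confusing $e_{j-2}^{(i-2)}$ with $e_{j-2}^{(i-1)}$, and respecting that $\gamma_{j+1}$, $\gamma_j$, $\gamma_2$ occur with different product lengths). There is no genuine analytic content left: the substantive floating-point analysis has already been carried out in Theorems \ref{t_1}, \ref{t_2} and \ref{t_3}, and the corollary merely packages their one-step bounds into a statement about the final coefficients $c_1,\ldots,c_k$.
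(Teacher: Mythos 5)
Your proposal is correct and follows essentially the same (and only sensible) route as the paper, which states Corollary \ref{c_runerr} without proof as an immediate consequence of Assumption~3, the error convention (\ref{e_error}) giving $\fl[c_j]-c_j=e_j^{(n)}$, and the recursive bounds on $|e_j^{(i)}|$ from Theorems \ref{t_1}--\ref{t_3}. Your additional bookkeeping about unrolling the recursions in $i$ and $j$ is a faithful elaboration of what the paper leaves implicit, and introduces no gap.
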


\paragraph{Potential instability of La Budde's method}
The running error bounds reflect the potential instability of La
Budde's method.  The coefficient $c_j^{(i)}$ is computed from
the preceding coefficients $c_j^{(i-1)},\ldots,c_j^{(i-j+1)}$. La Budde's
 method can produce inaccurate results for $c_j^{(i)}$, if the
magnitudes of preceding coefficients are very large compared to
$c_j^{(i)}$ so that catastrophic cancellation occurs in the
computation of $c_j^{(i)}$. This means the error in the
computed coefficient $\hat c_j$ can be
large if the preceding coefficients in the characteristic polynomials
of the leading principal submatrices are larger than $\hat{c}_j$.

It may be that 
the instability of La Budde's method is related to the illconditioning
of the coefficients. Unfortunately we were not able to show this connection.

\section{Overall Error Bounds}\label{s_combined}
We present first order error bounds for both stages of La Budde's method.
The bounds take into the account the error from the 
reduction to Hessenberg (or tridiagonal) form in the first stage, 
as well as the roundoff error from the computation of the characteristic 
polynomial of the Hessenberg (or tridiagonal) matrix in the second stage.
We derive bounds for symmetric matrices in \S \ref{s_scombined},
and for nonsymmetric matrices in \S \ref{s_nscombined}.

\subsection{Symmetric Matrices}\label{s_scombined}
This bound combines the errors from the reduction of a
symmetric matrix $A$ to tridiagonal form $T$ with the roundoff
error from Algorithm 1.

Let $\tilde{T}=T+E$ be the tridiagonal
matrix computed in floating point arithmetic by applying
Householder similarity transformations to the symmetric matrix $A$.
From \cite[\S 8.3.1.]{GovL96} follows that for some small constant $\nu_1>0$ 
one can bound the error in the Frobenius norm by
\begin{eqnarray}\label{e_sym} 
\|E\|_F\leq  \>\nu_1 n^2\|A\|_F\>u.
\end{eqnarray}
The backward error $E$ can be
viewed as a matrix perturbation. This means we need to incorporate the
sensitivity of the coefficients $c_j$ to changes $E$ in the matrix.
The condition numbers that quantify this sensitivity 
can be expressed in terms
of elementary symmetric functions of the singular values
\cite{IpsR07}.
Let $\sigma_1\geq\ldots\geq \sigma_n$ be the singular values of $A$, 
and denote by 
$$s_0\equiv 1, \qquad
s_j\equiv \sum_{1\leq i_1<\cdots<i_j\leq n}{\sigma_{i_1}\cdots \sigma_{i_j}},
\qquad 1\leq j\leq n,$$
the $j$th elementary symmetric function in all $n$ singular values.

\begin{theorem}[Symmetric Matrices]\label{t_symerr}
If the assumptions in \S \ref{s_ass1} hold,
$A$ is real symmetric with
$\|A\|_F< 1/(\nu_1  n^2u)$ for the constant $\nu_1$ in (\ref{e_sym}), 
$\tilde{c}_j$ are the coefficients of the characteristic polynomial 
of $\tilde{T}$, then
$$|\fl[\tilde{c}_j]-c_j|\leq (n-j+1)\>s_{j-1}\> \nu_1 n^2\|A\|_F\>u+\phi_j+
\mathcal{O}\left(u^2\right),\qquad 1\leq j\leq k,$$
where $\phi_j$ are the running error bounds from Corollary \ref{c_srunerr}.
\end{theorem}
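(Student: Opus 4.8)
The plan is to split the total error $\fl[\tilde c_j]-c_j$ into two pieces by the triangle inequality: the \emph{backward} error of the reduction, which perturbs the exact coefficients $c_j$ of $A$ (equivalently of the exact tridiagonal form $T$, since orthogonal similarity preserves the characteristic polynomial) into the exact coefficients $\tilde c_j$ of the computed matrix $\tilde T$; and the \emph{roundoff} error of Algorithm 1 run in floating point on the input $\tilde T$, which is precisely the situation of Corollary \ref{c_srunerr}. That is,
$$|\fl[\tilde c_j]-c_j|\ \le\ \underbrace{|\fl[\tilde c_j]-\tilde c_j|}_{\text{Algorithm 1 roundoff}}+\underbrace{|\tilde c_j-c_j|}_{\text{reduction backward error}},$$
and I would bound the two summands separately.

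For the roundoff term I would simply invoke Corollary \ref{c_srunerr}, with $\tilde T$ playing the role of the input tridiagonal matrix: this yields $|\fl[\tilde c_j]-\tilde c_j|\le\phi_j$, where $\phi_j=|e_j^{(n)}|$ is the running error bound assembled from Theorems \ref{error_1}, \ref{error_2}, and \ref{error_3} along the computation actually carried out on $\tilde T$. No change to $\phi_j$ is needed; it is quoted verbatim.

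For the backward-error term I would use the perturbation theory of the characteristic-polynomial coefficients from \cite{IpsR07}. Writing $\tilde T=T+E$, the bound (\ref{e_sym}) gives $\|E\|_F\le\nu_1 n^2\|A\|_F\,u$, and the hypothesis $\|A\|_F<1/(\nu_1 n^2u)$ forces $\nu_1 n^2\|A\|_F\,u<1$, so $\|E\|_F$ is small and the first-order expansion of $c_j$ about $T$ is valid with a genuine $\mathcal{O}(\|E\|_F^2)$ remainder. The first-order sensitivity of $c_j$ to an additive matrix perturbation is controlled by a condition number that \cite{IpsR07} bounds by $(n-j+1)\,s_{j-1}$, where $s_{j-1}$ is the $(j-1)$st elementary symmetric function of the singular values; these singular values are the same for $A$ and $T$ because the reduction is by orthogonal similarity, so the $s_{j-1}$ in the statement (defined via the singular values of $A$) is the correct quantity. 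Hence $|\tilde c_j-c_j|\le(n-j+1)\,s_{j-1}\,\|E\|_F+\mathcal{O}(\|E\|_F^2)\le(n-j+1)\,s_{j-1}\,\nu_1 n^2\|A\|_F\,u+\mathcal{O}(u^2)$, using $\|E\|_F^2=\mathcal{O}(u^2)$. Combining the two bounds gives the theorem.

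The only real obstacle is bookkeeping rather than anything deep: one must check that $\phi_j$ is the right object here — it is, since Corollary \ref{c_srunerr} bounds the error of Algorithm 1 for \emph{whatever} tridiagonal matrix is supplied, and here the supplied matrix is $\tilde T$ — and one must confirm that mixing the exact-arithmetic perturbation bound (naturally phrased via $\tilde c_j$) with the floating-point running error bound (naturally phrased via the computed $\hat c_j$) introduces only $\mathcal{O}(u^2)$ discrepancies, so that $\phi_j$ may be stated without modification. A secondary point is to cite precisely which condition-number estimate of \cite{IpsR07} produces the factor $(n-j+1)\,s_{j-1}$; everything else is a single application of the triangle inequality.
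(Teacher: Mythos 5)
Your proposal is correct and follows essentially the same route as the paper: the same triangle-inequality split into the Algorithm 1 roundoff term (bounded by $\phi_j$ via Corollary \ref{c_srunerr}) and the reduction backward-error term (bounded by the first-order perturbation result of \cite{IpsR07} together with (\ref{e_sym}) and the invariance of singular values under orthogonal similarity). The additional bookkeeping remarks you make are consistent with, and if anything slightly more careful than, the paper's own argument.
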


\begin{proof}
The triangle inequality implies
$$|\fl[\tilde{c}_j]-c_j|\leq |\fl[\tilde{c}_j]-\tilde c_j|+|\tilde c_j-c_j|.$$
Applying Corollary \ref{c_runerr} to the
first term gives $|\fl[\tilde{c}_j]-\tilde c_j|\leq \phi_j$.

Now we bound the second term $|\tilde c_j-c_j|$, and use the 
fact that $A$ and $T$ have the same singular values.
If $\|E\|_2<1$ then the absolute first order perturbation bound
\cite[Remark 3.6]{IpsR07} applied to $T$ and $T+E$ gives
$$|\tilde c_j-c_j|\leq  (n-j+1) s_{j-1}\>\|E\|_2\>u+
\mathcal{O}\left(\|E\|_2^2\right), \qquad 1\leq j\leq k.$$
From $(\ref{e_sym})$ follows $\|E\|_2\leq \|E\|_F\leq \nu_1 n^2 \|A\|_F\>u$.
Hence we need $\|A\|_F<1/(\nu_1 n^2 u)$ to apply the above perturbation bound.
\end{proof}

Theorem \ref{t_symerr} suggests two sources for the error in 
the computed coefficients $\fl[\tilde{c}_j]$: the sensitivity 
of $c_j$ to perturbations in the matrix, 
and the roundoff error $\rho_j$ introduced by Algorithm 1.
The sensitivity of $c_j$ to perturbations in the matrix is represented 
by the first order condition number $(n-j+1)s_{j-1}$, which amplifies
the error $\nu_1n^2\|A\|_F\>u$ from the reduction to tridiagonal form.

\subsection{Nonsymmetric Matrices}\label{s_nscombined}
This bound combines the errors from the reduction of a
nonsymmetric matrix $A$ to upper Hessenberg form $H$ with the roundoff
error from Algorithm 2.

Let $\tilde{H}=H+E$ be the upper
Hessenberg matrix computed in floating point arithmetic by applying
Householder similarity transformations to $A$.
From \cite[\S 7.4.3]{GovL96} follows that for some small constant $\nu_2>0$ 
\begin{eqnarray}\label{e_nonsym} 
\|E\|_F\leq  \>\nu_2 n^2\|A\|_F\>u.
\end{eqnarray}
The polynomial coefficients of nonsymmetric matrices are more sensitive
to changes in the matrix than those of symmetric matrices. 
The sensitivity is a function of only the largest singular values, rather
than all singular values \cite{IpsR07}. We define
$$s_0^{(1)}=1, \qquad
s_{j-1}^{(j)}\equiv \sum_{1\leq i_1<\cdots<i_{j-1}\leq j}{\sigma_{i_1}\cdots 
\sigma_{i_{j-1}}}\leq j\sigma_1\cdots\sigma_{j-1},
\qquad 1\leq j\leq n,$$
which is the $(j-1)$st elementary symmetric function in only the 
$j$ largest singular values.

\begin{theorem}[Nonsymmetric Matrices]\label{t_nonsymerr}
If the assumptions in \S \ref{s_ass1} hold, 
$\|A\|_F< 1/(\nu_2 n^2u)$ for the constant $\nu_2$ in (\ref{e_nonsym}),
and $\tilde{c}_j$ are the coefficients of the characteristic polynomial 
of $\tilde{H}$, then
$$\left|\fl[\tilde{c}_j]-c_j\right|\leq  
{n \choose j}s_{j-1}^{(j)}\> \nu_2 n^2\|A\|_F\>u+\rho_j
+\mathcal{O}\left(u^2\right),\qquad 1\leq j\leq k,$$
where $\rho_j$ are the running error bounds from Corollary \ref{c_runerr}.
\end{theorem}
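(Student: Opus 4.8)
The plan is to mirror the proof of Theorem \ref{t_symerr} almost verbatim, since the structure is identical: split the total error into a floating-point part and a perturbation part via the triangle inequality,
$$\left|\fl[\tilde{c}_j]-c_j\right|\leq \left|\fl[\tilde{c}_j]-\tilde{c}_j\right|+\left|\tilde{c}_j-c_j\right|.$$
The first term is bounded by $\rho_j$ directly from Corollary \ref{c_runerr}, applied to the computed Hessenberg matrix $\tilde{H}$, since Algorithm 2 is what produces $\fl[\tilde{c}_j]$ from $\tilde{H}$. For the second term I would invoke the first-order perturbation bound for characteristic polynomial coefficients from \cite{IpsR07}, this time the \emph{nonsymmetric} version: the relevant condition number is ${n\choose j}s_{j-1}^{(j)}$ (the binomial coefficient times the $(j-1)$st elementary symmetric function in only the $j$ largest singular values), rather than the symmetric $(n-j+1)s_{j-1}$. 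This gives
$$\left|\tilde{c}_j-c_j\right|\leq {n\choose j}s_{j-1}^{(j)}\>\|E\|_2+\mathcal{O}\left(\|E\|_2^2\right),\qquad 1\leq j\leq k.$$

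Next I would substitute the first-stage backward error bound (\ref{e_nonsym}), namely $\|E\|_2\leq\|E\|_F\leq\nu_2 n^2\|A\|_F\>u$, which is where the hypothesis $\|A\|_F<1/(\nu_2 n^2 u)$ enters: it guarantees $\|E\|_2<1$ so the perturbation expansion is valid and the neglected terms are genuinely $\mathcal{O}(u^2)$ (since $\|E\|_2=\mathcal{O}(u)$). Combining the two pieces yields
$$\left|\fl[\tilde{c}_j]-c_j\right|\leq {n\choose j}s_{j-1}^{(j)}\>\nu_2 n^2\|A\|_F\>u+\rho_j+\mathcal{O}\left(u^2\right),$$
as claimed. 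One should also note that $A$ and $H$ are related by exact orthogonal similarity up to roundoff, so $A$ and $\tilde{H}$ share singular values to first order; more carefully, the $\sigma_i$ appearing in $s_{j-1}^{(j)}$ are those of $A$ (equivalently of $H$), and the difference between the singular values of $H$ and $\tilde{H}$ is itself $\mathcal{O}(u)$ and hence absorbed into the error term.

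I expect no serious obstacle here — this is essentially a transcription of the symmetric proof with the symmetric condition number $(n-j+1)s_{j-1}$ swapped for the nonsymmetric one ${n\choose j}s_{j-1}^{(j)}$ and (\ref{e_sym}) swapped for (\ref{e_nonsym}). The only point requiring a little care is citing the correct perturbation result from \cite{IpsR07}: one needs the bound phrased in terms of the largest singular values only (reflecting that nonsymmetric coefficients depend on fewer singular values), and one must make sure the constant matches the definition of $s_{j-1}^{(j)}$ given just above the theorem statement. A secondary subtlety is whether to bound $\|E\|_2$ by $\|E\|_F$ as done in the symmetric case (clean but slightly loose) or to keep $\|E\|_2$ — I would follow the symmetric proof and use the Frobenius-norm bound for consistency, noting the factor $\nu_2 n^2$ already absorbs such slack.
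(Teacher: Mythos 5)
Your proposal matches the paper's proof essentially verbatim: the same triangle-inequality split, the same application of Corollary \ref{c_runerr} to the floating-point term, the same nonsymmetric first-order perturbation bound from \cite{IpsR07} with condition number ${n\choose j}s_{j-1}^{(j)}$, and the same substitution $\|E\|_2\leq\|E\|_F\leq\nu_2 n^2\|A\|_F\,u$ justified by the hypothesis on $\|A\|_F$. The only differences are cosmetic --- your added remark about the singular values of $H$ versus $\tilde{H}$ goes slightly beyond what the paper says, and your version of the intermediate perturbation bound correctly omits a spurious factor of $u$ that appears in the paper's displayed inequality.
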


\begin{proof}
The proof is similar to that of Theorem \ref{t_symerr}.
The triangle inequality implies
$$|\fl[\tilde{c}_j]-c_j|\leq |\fl[\tilde{c}_j]-\tilde c_j|+|\tilde c_j-c_j|.$$
Applying Corollary \ref{c_runerr} to the
first term gives $|\fl[\tilde{c}_j]-\tilde c_j|\leq \rho_j$.

Now we bound the second term $|\tilde c_j-c_j|$, and use the 
fact that $A$ and $H$ have the same singular values.
If $\|E\|_2<1$ then the absolute first order perturbation bound
\cite[Remark 3.4]{IpsR07} applied to $H$ and $H+E$ gives
$$|\tilde c_j-c_j|\leq  {n \choose j}s_{j-1}^{(j)}\|E\|_2\>u+
\mathcal{O}\left(\|E\|_2^2\right), \qquad 1\leq j\leq k.$$
From $(\ref{e_nonsym})$ follows $\|E\|_2\leq \|E\|_F\leq \nu_2 n^2 \|A\|_F\>u$.
Hence we need $\|A\|_F<1/(\nu_2 n^2 u)$ to apply the above perturbation bound.
\end{proof}

As in the symmetric case, there are two sources for the error in 
the computed coefficients $\fl[\tilde{c}_j]$: the sensitivity of $c_j$
to perturbations in the matrix, and 
the roundoff error $\rho_j$ introduced by Algorithm 2.
The sensitivity of $c_j$ to perturbations in the matrix is represented 
by the first order condition number ${n\choose j}s_{j-1}^{(j)}$, which amplifies
the error $\nu_2n^2\|A\|_F\>u$ from the reduction to Hessenberg form.

\section{Numerical Experiments}\label{s_exp}
We compare the accuracy of Algorithms 1 and 2 to MATLAB's \texttt{poly}
function, and demonstrate the performance of the running error bounds
from Corollaries \ref{c_srunerr} and \ref{c_runerr}.
The experiments illustrate that Algorithms 1 and 2 tend to be more accurate 
than \texttt{poly}, and sometimes substantially so,
especially when the matrices are indefinite or nonsymmetric.

We do not present plots for the overall error bounds in Theorems
\ref{t_symerr} and \ref{t_nonsymerr}, because they turned out to be
much more pessimistic than expected.  We conjecture that the errors
from the reduction to Hessenberg form have a particular structure that
is not captured by the condition numbers.

The coefficients computed with Algorithms 1 and 2 are denoted by $c_k^{alg1}$
and $c_k^{alg2}$, respectively, while the coefficients computed
by \texttt{poly} are denoted by $c_k^{poly}$. Furthermore,
we distinguish the characteristic polynomials of different matrices
by using $c_k(X)$ for the $k$th coefficient of the characteristic
polynomial of the matrix $X$.

\begin{figure}
\begin{center}
\resizebox{3in}{!}
{\includegraphics{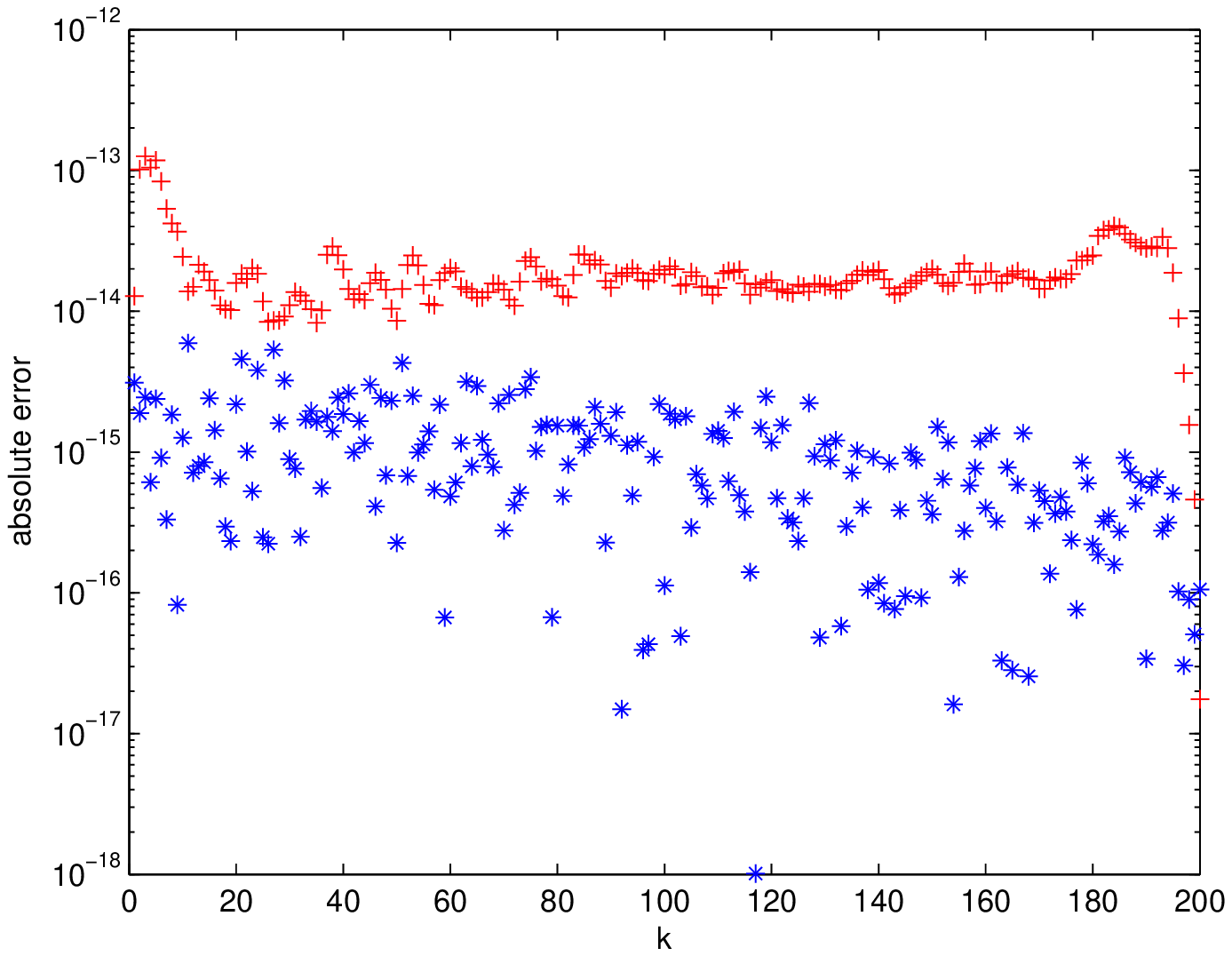}}
\end{center}
\caption{Forsythe Matrix. Lower (blue) curve: 
Absolute errors $|c_k^{alg2}(F)-c_k(F)|$
of the coefficients computed by Algorithm 2. Upper (red) curve:
Running error bounds $\rho_k$ from Corollary \ref{c_runerr}.}
\label{f_forsythe3}
\end{figure}

\begin{figure}
\begin{center}
\resizebox{3in}{!}
{\includegraphics{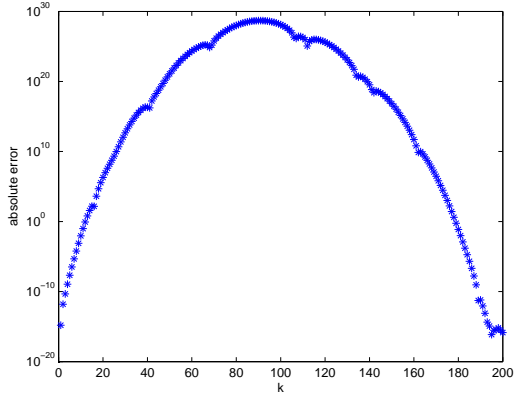}}
\end{center}
\caption{Forsythe Matrix. Coefficients $c_k^{poly}(F)$ computed by
\texttt{poly}. The exact coefficients are $c_k=0$, $1\leq k\leq 199$.}
\label{f_forsythe3poly}
\end{figure}

\subsection{The Forsythe Matrix}
This example illustrates that Algorithm 2 can compute the coefficients
of a highly nonsymmetric matrix more accurately than \texttt{poly}, and that
the running error bounds from Corollary \ref{c_runerr} approximate the 
roundoff error from Algorithm 2 well.

We choose a $n\times n$ Forsythe matrix, which is a 
perturbed Jordan block of the form 
\begin{eqnarray}\label{e_forsythe}
F_2=\begin{pmatrix}
0&1& & & \\
 & \ddots& \ddots & &  \\
 & & 0 & 1& \\
\nu & &  & 0&  \end{pmatrix},
\qquad \mathrm{where} \quad \nu=10^{-10},
\end{eqnarray}
with characteristic polynomial $p(\lambda)=\lambda^n-\nu$.
Then we perform
an orthogonal similarity transformation $F_1=QF_2Q^T$,
where $Q$ is an orthogonal matrix obtained from the QR decomposition
of a random matrix.  The orthogonal similarity transformation 
to upper Hessenberg form $F$ is produced by $F=$ \texttt{hess}$(F_1)$.

We applied Algorithm 2 to a matrix $F$ of order $n=200$.
Figure \ref{f_forsythe3} shows that Algorithm 2 produces absolute errors 
of about $10^{-15}$, and that the running error bounds from Corollary
\ref{c_runerr} approximate the roundoff error from Algorithm 2 well.
In contrast, the absolute errors produced by \texttt{poly} are huge, 
as Figure \ref{f_forsythe3poly} shows.

\begin{figure}
\begin{center}
\resizebox{3in}{!}
{\includegraphics{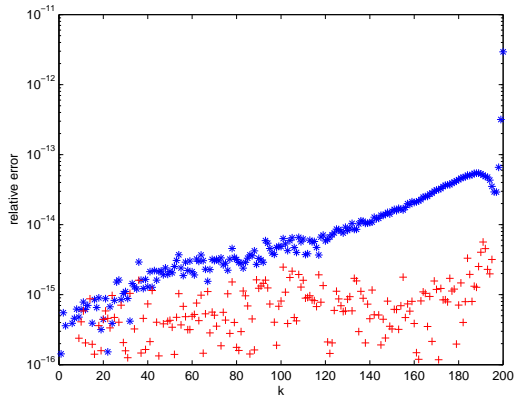}}
\end{center}
\caption{Hansen's Matrix. Upper (blue) curve: 
Relative errors $|c_k^{poly}(H)-c_k(H)|/|c_k(H)|$ of coefficients
computed by \texttt{poly}. Lower (red) curve: Relative errors 
$|c_k^{alg1}(H)-c_k(H)|/|c_k(H)|$ of coefficients computed by Algorithm 1.
}\label{f_hansen3rel}
\end{figure}

\begin{figure}
\begin{center}
\resizebox{3in}{!}
{\includegraphics{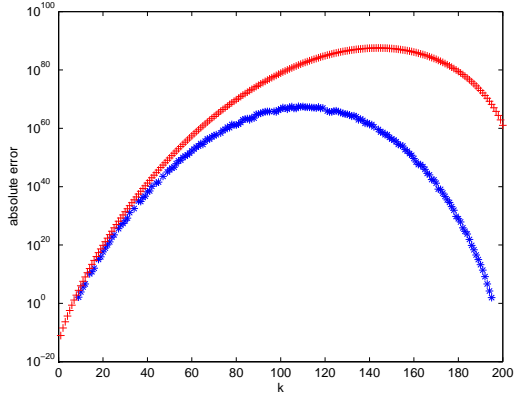}}
\end{center}
\caption{Hansen's Matrix. Lower (blue) curve: Absolute errors 
$|c_k^{alg1}(H)-c_k|$ in the coefficients computed by Algorithm 1.
Upper (red) curve: Running error bounds $\phi_k$ from 
Corollary \ref{c_srunerr}.}\label{f_hansen3abs}
\end{figure}

\subsection{Hansen's Matrix}
This example illustrates that Algorithm 1 can compute the characteristic
polynomial of a symmetric positive definite matrix to machine precision.

Hansen's matrix \cite[p 107]{Han63} is a rank one perturbation of a $n\times n$ 
symmetric tridiagonal Toeplitz matrix,
$$H=\begin{pmatrix}
1&-1& & \\
 -1&2& \ddots & \\
 &\ddots  & \ddots &-1 \\
 & & -1& 2
\end{pmatrix}.$$
Hansen's matrix is positive definite, and 
the coefficients of its characteristic polynomial are
\begin{eqnarray*}
c_{n-k+1}(H)=(-1)^{n-k+1} {n+k-1\choose n-k+1}, \qquad 1\leq k\leq n.
\end{eqnarray*}
Figure \ref{f_hansen3rel} illustrates for $n=200$ that the
Algorithm 1 computes the coefficients to machine precision, and
that later coefficients have higher relative 
accuracy than those computed by \texttt{poly}.
With regard to absolute errors, Figure \ref{f_hansen3abs} indicates that the 
running error bounds $\phi_j$ from Corollary \ref{c_srunerr} reflect the 
trend of the errors, but the
bounds become more and more pessimistic for larger $k$.

\begin{figure}
\begin{center}
\resizebox{3in}{!}
{\includegraphics{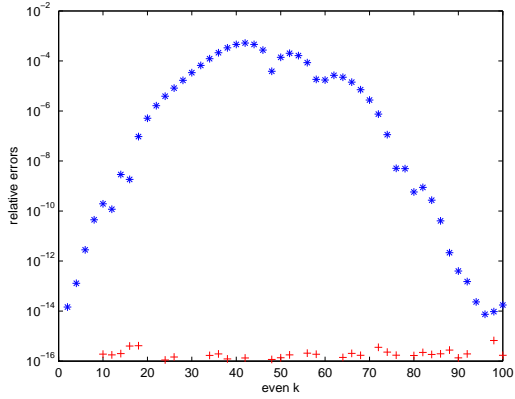}}
\end{center}
\caption{Symmetric Indefinite Tridiagonal Toeplitz Matrix. 
Upper (blue) curve: Relative errors $|c_k^{poly}(T)-c_k(T)|/|c_k(T)|$ 
in the coefficients
computed by \texttt{poly} for even $k$. Lower (red) curve: Relative errors
$|c_k^{alg1}(T)-c_k(T)|/|c_k(T)|$ in the coefficients computed by Algorithm 1
for even $k$.}
\label{f_indef3releven}
\end{figure}

\begin{figure}
\begin{center}
\resizebox{3in}{!}
{\includegraphics{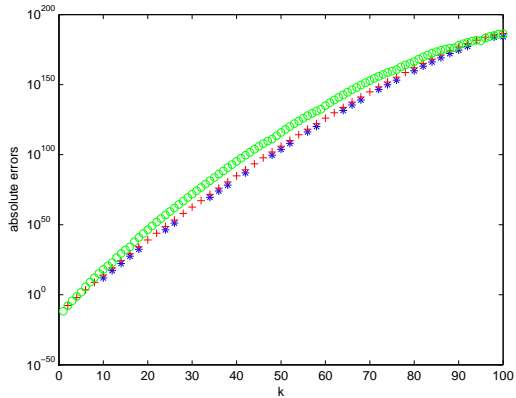}}
\end{center}
\caption{Symmetric Indefinite Tridiagonal Toeplitz Matrix. 
Lower (blue) curve: Absolute errors $|c_k^{alg1}(T)-c_k(T)|$ in the 
coefficients computed by Algorithm 1. Middle (red) curve: Running
error bounds $\phi_j$ from Corollary \ref{c_srunerr}. Upper (green) curve: 
Absolute errors $|c_k^{poly}(T)-c_k(T)|$ in the coefficients 
computed by \texttt{poly}.}
\label{f_indef3abs}
\end{figure}

\subsection{Symmetric Indefinite Toeplitz Matrix}
This example illustrates that Algorithm 1 can compute the characteristic 
polynomial of a symmetric indefinite matrix to high relative
accuracy, and that the running error bounds in Corollary \ref{c_srunerr}
capture the absolute error well.

The matrix is a $n\times n$ symmetric indefinite tridiagonal Toeplitz matrix
$$T=\begin{pmatrix}
0&100& & \\
 100& \ddots & \ddots & &  \\
 & \ddots& 0 & 100\\
 &  & 100& 0 \end{pmatrix},$$
where the coefficients with index are zero, i.e. $c_{2j-1}(T)= 0$ for $j\geq 1$.

For $n=100$ we obtained the exact coefficients $c_k(T)$ with
\texttt{sym2poly(poly(sym(T)))} from MATLAB's symbolic toolbox.
Algorithm 1 computes the coefficients with odd index exactly, i.e.
$c_{2j-1}^{(i)}(T)= 0$ for $j\geq 1$ and $1\leq i\leq n$. In contrast,
as Figure \ref{f_indef3abs} shows, the coefficients
computed by \texttt{poly} can have magnitudes as large $10^{185}$.

Figure \ref{f_indef3releven} illustrates that Algorithm 1
computes the coefficients $c_{2j}(T)$ with even index to machine
precision, while the coefficients computed with \texttt{poly} have relative
errors that are many magnitudes larger.
Figure \ref{f_indef3abs} also shows that the running error bounds 
approximate the true absolute error very well. What is not visible
in Figure \ref{f_indef3abs}, but what one can show from Theorems
\ref{error_1} and \ref{error_2} is that $\phi_{2j-1}=0$. Hence the
running error bounds recognize that $c_{2j-1}$ are computed exactly.

\begin{figure}
\begin{center}
\resizebox{3in}{!}
{\includegraphics{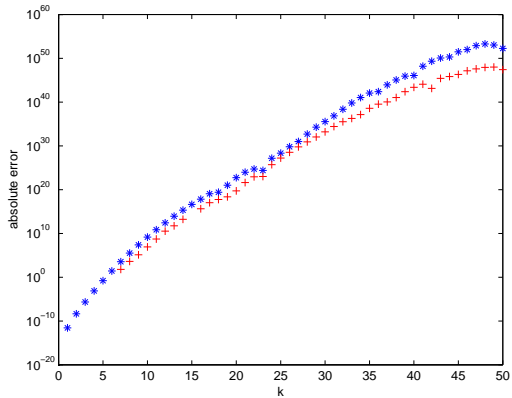}}
\end{center}
\caption{Frank Matrix. Upper (blue) curve: Absolute errors 
$|c_k^{poly}(U)-c_k(U)|$ in the coefficients computed by \texttt{poly}.
Lower (red)  curve: Absolute errors $|c_k^{alg2}(U)-c_k(U)|$ in the 
coefficients computed by Algorithm 2.}
\label{f_frank3abs}
\end{figure}

\begin{figure}
\begin{center}
\resizebox{3in}{!}
{\includegraphics{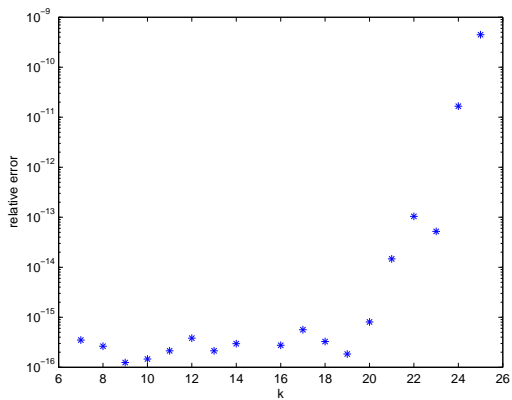}}
\end{center}
\caption{Frank Matrix. Relative errors 
$|c_k^{alg2}(U)-c_k(U)|/|c_k(U)|$ in the first 25
coefficients computed by Algorithm 2.}
\label{f_frank3rel}
\end{figure}

\subsection{Frank Matrix}
This example shows that Algorithm 2 is at least as accurate, if not more
accurate than \texttt{poly} for matrices with ill conditioned polynomial
coefficients.

The Frank matrix $U$ is an upper Hessenberg matrix with 
determinant 1 from MATLAB's \texttt{gallery} command of test matrices.
The coefficients of the characteristic polynomial 
appear in pairs, in the sense that $c_k(U)=c_{n-k}(U)$.
For a Frank matrix of order $n=50$, we used MATLAB's toolbox to
determine the exact coefficients $c_k(U)$ with the command
\texttt{sym2poly(poly(sym(U)))}. Figure \ref{f_frank3abs} illustrates
that Algorithm 2 computes the coefficients at least as accurately as
\texttt{poly}. In fact, as seen in Figure \ref{f_frank3rel}, Algorithm 2
computes the first 20 coefficients to high relative accuracy.

\begin{figure}
\begin{center}
\resizebox{3in}{!}
{\includegraphics{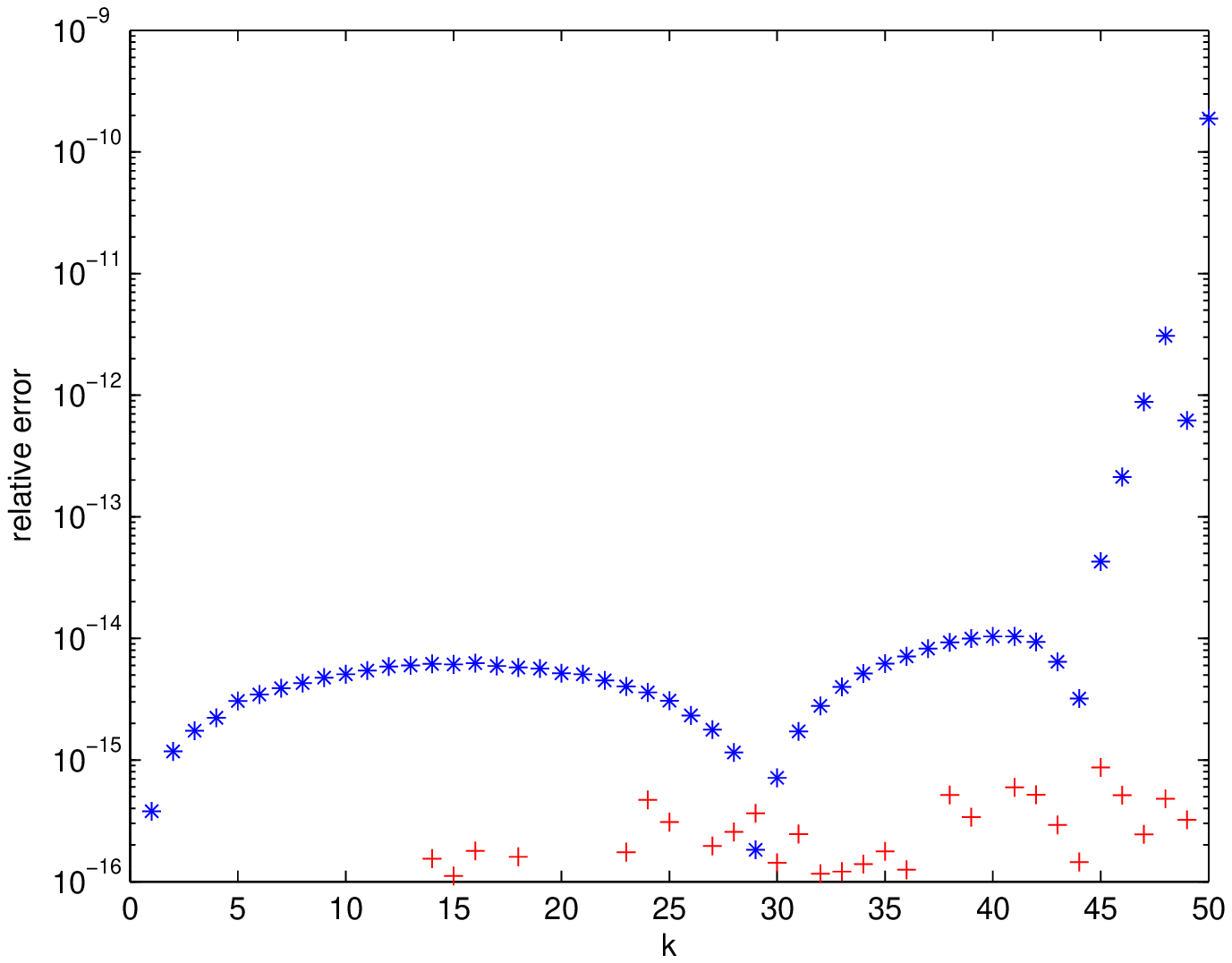}}
\end{center}
\caption{Transposed Chow Matrix. Upper (blue) curve: Relative errors 
$|c_k^{poly}(C^T)-c_k(C^T)|/|c_k(C^T)|$ in the coefficients computed 
by \texttt{poly}.
Lower (red) curve: Relative errors $|c_k^{alg2}(C^T)-c_k(C^T)|/|c_k(C^T)|$ 
in the coefficients computed by Algorithm 2.}
\label{f_chow3}
\end{figure}

\begin{figure}
\begin{center}
\resizebox{3in}{!}
{\includegraphics{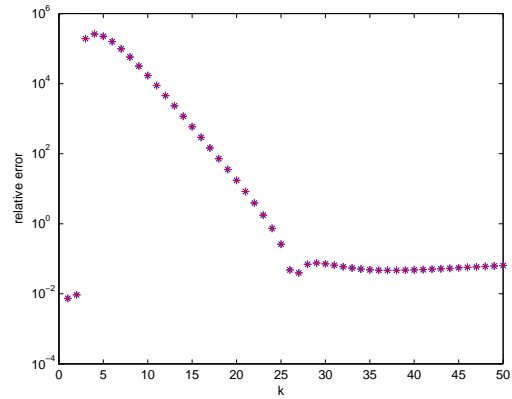}}
\end{center}
\caption{Chow Matrix. Blue curve: Relative errors 
$|c_k^{poly}(C)-c_k(C)|/|c_k(C)|$ 
in the coefficients computed by \texttt{poly}.
Red curve: Relative errors $|c_k^{alg2}(C)-c_k(C)|/|c_k(C)|$
in the coefficients computed by Algorithm 2. The two curves are 
virtually indistinguishable.}
\label{f_chowt3}
\end{figure}

\subsection{Chow Matrix}
This example illustrates that the errors in the reduction to Hessenberg
form can be amplified substantially when the coefficients of the
characteristic polynomial are illconditioned.

The Chow matrix is a matrix that is Toeplitz as well
as lower Hessenberg from MATLAB's \texttt{gallery} command of test matrices.
Our version of the transposed Chow matrix is an upper Hessenberg matrix
with powers of 2 in the leading row and trailing column,
$$C^T=\begin{pmatrix}3 &     4& \ldots      &2^n\\
                   1& \ddots& \ddots& \vdots\\
                    & \ddots& 3     & 4\\
                    &       & 1     &3\end{pmatrix}.$$
As before, we computed the exact coefficients with MATLAB's symbolic 
toolbox. Figure \ref{f_chow3} illustrates that Algorithm 2 computes all 
coefficients $c_k(C^T)$ to high relative accuracy for $n=50$.
In contrast,  the relative accuracy
of the coefficients computed by \texttt{poly} deteriorates markedly as $k$
becomes larger.

However, if we compute instead the characteristic polynomial of $C$, then
a preliminary reduction to upper Hessenberg form is necessary. 
Figure \ref{f_chowt3} illustrates that the computed coefficients have hardly any
relative accuracy to speak of, and only the trailing coefficients
have about 1 significant digit. The loss of accuracy occurs
because the errors in the reduction to Hessenberg form are amplified by the 
condition numbers of the coefficients, as the absolute
bound in Theorem \ref{t_nonsymerr} suggests. Unfortunately, 
in this case, the condition numbers in Theorem \ref{t_nonsymerr} 
are too pessimistic to predict
the absolute error of Algorithm 2.

\subsection*{Acknowledgements} We thank Dean Lee for helpful discussions.

\end{document}